\newtheorem{thm}{Theorem}[section]
\newtheorem{lem}[thm]{Lemma}
\newtheorem{prop}[thm]{Proposition}
\newtheorem{rem}[thm]{Remark}
\begin{document}
\title{\huge \textbf{On unitary groups of crossed product von Neumann algebras}}
\author{Yasuhito Hashiba \thanks{Graduate School of Mathematics, University of Tokyo, Komaba, Tokyo 153-8914, Japan\\  email: \texttt{y.hashiba00@gmail.com}}}
\date{}
\maketitle

\begin{abstract}
We consider the tracial crossed product algebra $M=A\rtimes\Lambda$ arising from a trace preserving action $\sigma:\Lambda \curvearrowright A$ of a discrete group $\Lambda$ on a tracial von Neumann algebra $A$. For a unitary subgroup $\mathcal{G}\subset \mathcal{U}(M)$, we study when this $\mathcal{G}$ can be conjugated into $\mathcal{U}(A)\cdot\Lambda$ in $M$. We provide a general sufficient condition for this to happen. Our result generalizes  \cite[Theorem 3.1]{ioana2013class} which treats the case when $M$ is the group von Neumann algebra $L(\Lambda)$.
\end{abstract}

\section{INTRODUCTION} 
When we have a (tracial) von Neumann algebra $A$ and a discrete group $\Lambda$ which acts on $A$, we can construct the \textit{crossed product von Neumann algebra} $A\rtimes\Lambda$. In the case  $A=\mathbb{C}$, we obtain the \textit{group von Neumann algebra} $L(\Lambda)$. Many interesting and important examples of von Neumann algebras have been obtained through this construction. 

A problem of central importance in von Neumann algebra theory is to determine what kind of information of the initial action $\Lambda\curvearrowright A$ can be recovered from the crossed product  $A\rtimes \Lambda$. In general, a large part of the information on the action may fade away when passing to the crossed product algebra. This is best illustrated in the \textit{amenable} case. By the celebrated result of Connes \cite{connes1976classification} which shows that the \textit{hyperfinite}  $\mathrm{II_{1}}$ factor is the unique amenable  $\mathrm{II_{1}}$ factor, we see that any crossed product $\mathrm{II_{1}}$ factor $A\rtimes \Lambda$ with $A$ and $\Lambda$ amenable is isomorphic to the hyperfinite $\mathrm{II_{1}}$ factor. In other words, amenability is the only information that can be recovered from the crossed product algebra in this setting. 

However, the situation in the non-amenable case is significantly different. Thanks to the progress in Popa's deformation/rigidity theory  (\cite{popa2003strrgd}) during the past two decades, a large class of  actions $\Lambda\curvearrowright A$ that can be (partially) recovered from the crossed product  $A\rtimes \Lambda$ have been found. See for example \cite {ioana2011w}, \cite{ioana2013class}, \cite{popa2014unique} and the surveys \cite{vaes2010rigidity}, \cite{ioana2018rigidity} for an overview of recent results.

For many of the results in this direction, the key point is to show unitary conjugacy of two subalgebras (for example, $A$ and $B$ in $A\rtimes \Lambda = B\rtimes \Gamma$). A powerful method for showing such a result is the famous \textit{intertwining-by-bimodules technique} by Popa (\cite{popa2003strrgd}), which is repeatedly used in the above papers.

On the other hand, in the same situation $A\rtimes\Lambda = B\rtimes \Gamma$, determining if the canonical unitaries $\{u_{g}\}_{\in \Gamma}$ and $\{v_{h}\}_{h\in \Lambda}$ are unitarily conjugate is also an important and natural problem to study in the context of crossed product algebras. In this direction, Ioana, Popa and Vaes have found an interesting criterion in the setting of group von Neumann algebras  \cite[Theorem 3.1]{ioana2013class}. This theorem says that if we have $L(\Gamma)=L(\Lambda)$ for two i.c.c. groups $\Gamma$ and $\Lambda$, and $\inf_{g\in \Gamma} h_{\Lambda}(u_{g})>0$, where $h_{\Lambda}(x)=\max\{|\tau(xv_{h}^{*})| |h\in\Lambda\}$, then  $\mathbb{T}\Gamma$ and $\mathbb{T}\Lambda$ are unitarily conjugate. In this paper, we generalize this criterion to arbitrary crossed products. Our main result is the following theorem.

\begin{thm}
Let $M=A\rtimes\Lambda= B\times\Gamma$ be two crossed product decompositions of a  $\mathrm{II}_{1}$ factor $M$ with $\Lambda$ an i.c.c. group and assume that the action of $\Lambda$ on $L^{2}(Z(A))\ominus \mathbb{C}$ is weakly mixing. Set  $\mathcal{G}=\{bu_{g}|b\in \mathcal{U}(B), g\in \Gamma\}  \subset \mathcal{U}(M)$ and assume that the following conditions hold.
\begin{enumerate}
\item  The unitary representation $\{\mathrm{Ad} x\}_{x \in \mathcal{G}}$ on $L^{2}(M)\ominus \mathbb{C}$ is weakly mixing.
\item $\inf_{x\in \mathcal{G}}\sup_{h\in\Lambda}\|E_{A}(xv_{h}^{*})\|_{2} >0$.
\end{enumerate}
 Then there exists a unitary $w\in \mathcal{U}(M)$ such that $w\mathcal{G}w^{*}\subset \{av_{h}| a\in \mathcal{U}(A), h \in \Lambda\}$ holds.  \label{mainthm}
\end{thm}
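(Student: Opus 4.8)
The plan is to produce the conjugating unitary $w$ by transferring the problem, via the dual comultiplication, into a Popa intertwining problem inside $M\bar\otimes M$, and then to run the intertwining-to-conjugacy machine using the two weak mixing hypotheses. The central device is the trace-preserving $*$-homomorphism $\Delta:M\to M\bar\otimes M$ determined by $\Delta(av_h)=av_h\otimes v_h$ for $a\in A$, $h\in\Lambda$; one checks directly from $\Delta(av_h\cdot a'v_{h'})=a\sigma_h(a')v_{hh'}\otimes v_{hh'}$ that it is a well-defined trace-preserving embedding. Writing $\tilde M=M\bar\otimes M$ and $Q=M\bar\otimes\mathbb{C}$, I would regard $\tilde M$ as the (trivial-action) crossed product $Q\rtimes\Lambda$ with canonical unitaries $1\otimes v_h$. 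For $x\in\mathcal{G}$ with Fourier decomposition $x=\sum_h E_A(xv_h^*)v_h$, a one-line computation gives $E_Q\bigl(\Delta(x)(1\otimes v_h^*)\bigr)=E_A(xv_h^*)v_h\otimes 1$, whence $\sup_h\|E_Q(\Delta(x)(1\otimes v_h^*))\|_2=\sup_h\|E_A(xv_h^*)\|_2$. Thus hypothesis (2) is exactly the statement that the subgroup $\Delta(\mathcal{G})\subset\mathcal{U}(\tilde M)$ satisfies Popa's intertwining criterion (\cite{popa2003strrgd}) relative to $Q$, i.e.\ $\Delta(\mathcal{G})\prec_{\tilde M}M\bar\otimes\mathbb{C}$.

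The reason this is the correct reformulation is that $\Delta$ detects precisely the property of being ``supported on a single group element over $A$'': a unitary $x$ lies in $\mathcal{U}(A)\Lambda$ if and only if $\Delta(x)(1\otimes v_h^*)\in M\bar\otimes\mathbb{C}$ for some $h$, i.e.\ $\Delta(x)$ normalizes $Q$ and sits over a single $1\otimes v_h$. So the first main step is to feed $\Delta(\mathcal{G})\prec_{\tilde M}Q$ into Popa's theorem to obtain nonzero projections, a partial isometry, and a corner on which $\Delta(\mathcal{G})$ is intertwined into $Q$.

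Second, I would upgrade this partial intertwiner to an honest unitary conjugacy and pull it back through $\Delta$ to a single $w\in\mathcal{U}(M)$, following the transfer technique of \cite[Theorem 3.1]{ioana2013class}. This is where hypothesis (1), weak mixing of $\{\mathrm{Ad}\,x\}_{x\in\mathcal{G}}$ on $L^2(M)\ominus\mathbb{C}$, does the work: weak mixing forbids nonzero finite-dimensional $\mathcal{G}$-invariant subspaces, which is exactly what forces the intertwiner to lose no mass, so the corner can be taken to be the whole algebra and the partial isometry becomes a unitary. This step also produces $\theta:\mathcal{G}\to\mathcal{U}(M)$, $\theta(x)=wxw^*$, landing a priori only in the normalizer $\mathcal{N}_{\mathcal{U}(M)}(A)$; the weak mixing of $\Lambda$ on $L^2(Z(A))\ominus\mathbb{C}$, together with the i.c.c.\ assumption on $\Lambda$, is then used to cut this target down to $\mathcal{U}(A)\Lambda$ and to force the surviving coefficient $E_A(\theta(x)v_{h_x}^*)$ to be a \emph{unitary} of $A$ supported on a single $h_x$. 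Here hypothesis (1) plays the role the i.c.c.\ property of $\Gamma$ plays in \cite{ioana2013class}, while the weak mixing on $L^2(Z(A))\ominus\mathbb{C}$ is the genuinely new ingredient needed when $A\neq\mathbb{C}$ and is vacuous in the group von Neumann algebra case $Z(A)=\mathbb{C}$.

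I expect the main obstacle to be exactly this second stage: transporting the conclusion of Popa's theorem (a partial isometry on a corner) back across $\Delta$ to a globally defined unitary conjugating all of $\mathcal{G}$ simultaneously, while proving that no mass is lost. The operator-valued nature of the coefficients $E_A(xv_h^*)\in A$, as opposed to the scalars $\tau(u_gv_h^*)$ of the group case, and the fact that the maximizing $h$ in hypothesis (2) varies with $x$, are the two new difficulties beyond the scalar argument; both are absorbed by the $\Lambda$-grading introduced by $\Delta$ together with the two weak mixing hypotheses, but verifying that the resulting intertwiner is full --- equivalently, that the $\mathcal{G}$-representation admits no escaping finite-dimensional piece --- is the crux of the proof.
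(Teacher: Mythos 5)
Your proposal correctly identifies the comultiplication $\Delta$ and the general role of the two weak mixing hypotheses, but it has a genuine gap at its core, and the mechanism you propose for exploiting hypothesis (2) is not the one that works. Feeding $\Delta(\mathcal{G})$ into Popa's intertwining theorem relative to $Q=M\bar{\otimes}\mathbb{C}$ cannot give what you need: Popa's criterion outputs a $*$-homomorphism of a corner of the \emph{algebra} $\Delta(\mathcal{G})''=\Delta(M)$ into a corner of $Q$, i.e.\ purely algebra-level information, whereas the theorem to be proved is a statement about where each individual unitary $x\in\mathcal{G}$ is sent; no amount of ``upgrading the intertwiner'' recovers this group-level structure from the algebra-level one. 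Moreover, as you yourself note, the maximizing $h$ in hypothesis (2) varies with $x$, so the hypothesis does not verify Popa's criterion for any fixed pair of elements. The paper instead exploits (2) through the uniform lower bound $\tau\bigl((\Delta(x)\otimes x)_{132}(\Delta(x)\otimes x)^{*}\bigr)=\sum_{h}\|E_{A}(xv_{h}^{*})\|_{2}^{4}\ge\delta^{4}$ and the minimal-norm element of a weakly closed convex hull, producing a genuine unitary $Z$ with $(\Delta(x)\otimes x)_{132}Z=Z(\Delta(x)\otimes x)$ for all $x\in\mathcal{G}$ (proof of Proposition \ref{prop1}).

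The two steps you defer as ``the crux'' are precisely the two new technical results of the paper and are not supplied by the transfer technique of \cite{ioana2013class} in this generality. One needs (i) Lemma \ref{newlem}: a unitary $Z$ satisfying the pentagon-type identity $Z_{134}Z_{123}=Z_{124}$ factors as $Z=U_{13}^{*}U_{12}$; this is proved via a normal completely positive map on $N^{op}$, its Stinespring dilation, and a polar-decomposition argument with $\tau$-measurable operators, and it is what converts the commutation relation for $Z$ into $U\Delta(x)U^{*}=w_{x}\otimes x$. One also needs (ii) Lemma \ref{newlem2}: the descent from $L(\Lambda)\bar{\otimes}M$ back to $M$, carried out with the unitary $W=\sum_{h}v_{h}^{*}\otimes v_{h}e_{A}v_{h}^{*}$ in the basic construction, a direct-integral decomposition of $X=WU^{*}$ over an abelian algebra, and a family of projections $p_{h}$ permuted by $\mathcal{G}$ and by $Jv_{k}J$; this is where the partial isometry $w$ and the homomorphism $\delta:\mathcal{G}\to\Lambda$ actually come from, and where the weak mixing of $\Lambda$ on $L^{2}(Z(A))\ominus\mathbb{C}$ is used (Remark \ref{rem1}) to reduce the amplification $M_{n}(A)$ to $n=1$. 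Finally, you also need Lemma \ref{lem0}, together with the i.c.c.\ hypothesis and factoriality of $M$ (which gives $\mathcal{G}''=M\nprec_{M}A\rtimes C_{\Lambda}(h)$), to promote hypothesis (1) to weak mixing of $\{\mathrm{Ad}\Delta(x)\}_{x\in\mathcal{G}}$ on $L^{2}(L(\Lambda)\bar{\otimes}M)\ominus\mathbb{C}$, which is the ergodicity input for the convexity argument. As written, your proposal is an outline that stops exactly where the proof begins.
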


See Theorem \ref{theorem} for a more general version. This can be also thought of as an ``$\Gamma$-acting version'' of \cite[Theorem 1.3.2] {ioana2011w}.

As an application of our methods in this paper, we  give a short proof of an important result from \cite{popa2007cocycle} on unitary cocycles..

\section{PRELIMINARIES}  
\subsection{Terminology}
A tracial von Neumann algebra $(M, \tau)$ is a von Neumann algebras $M$ endowed with a faithful normal tracial state $\tau : M \to \mathbb{C}$. We denote by $L^{2}(M)$ the Hilbert space obtained by the GNS construction of $(M,\tau)$ and $\|x\|_{2} = \tau(x^{*}x)^{1/2}$ the norm of $x\in M$ considered as an element of $L^{2}(M)$. We say that $M$ is separable if it is separable with respect to this norm. Every von Neumann algebra considered in this paper is assumed to be  separable.
The adjoint map $x \in M\mapsto x^{*}\in M$ extends to an  anti-linear isometric involution on $ L^{2}(M)$ and is denoted as $J:L^{2}(M)\to L^{2}(M)$.  The group of unitary elements  of $M$ is denoted by $\mathcal{U}(M)$  and the set of projections of $M$ is denoted by $P(M)$. We set $(M)_{r}= \{x\in M | \parallel x \parallel \le r \}$ for $r\ge 0$. For a von Neumann subalgebra $N \subset M$, we regard $N$ as a tracial von Neumann algebra together with $\tau |_{N}$, and  $E_{N}$ denotes the trace preserving normal conditional expectation $M \to N$ and $e_{N}:L^{2}(M)\to L^{2}(N)$ the corresponding projection. For $n\in \mathbb{N}$, we write $M_{n}(M)$ for $M_{n}(\mathbb{C})\otimes M$. We also write  $M_{\infty}(M)$ for $B(l^{2}(\mathbb{N}))\bar{\otimes}M$. The canonical matrix units for $B(l^{2}(\mathbb{N}))$ are denoted as  $\{e_{i,j}\}_{i,j\in\mathbb{N}}\in B(l^{2}(\mathbb{N}))$. When $M_{n}(M)$ is in consideration for some $n \in \mathbb{N}\cup \{\infty\}$, and $N\subset M$ is a subalgebra, $E_{N}$ also denotes the amplified conditional expectation $M_{n}(M)\to M_{n}(N)$.  For elements in a tensor product von Neumann algebra, we use the tensor leg numbering notation. For example, we write 
\begin{align*}
&(a\otimes b)_{13} = a\otimes 1 \otimes b \in M\bar{\otimes}M\bar{\otimes}M, \\
&(a\otimes b\otimes c)_{134}=a\otimes 1 \otimes b \otimes c\in M\bar{\otimes}M\bar{\otimes}M\bar{\otimes}M, \\
&(a\otimes b\otimes c)_{132}=a\otimes c \otimes b  \in M\bar{\otimes}M\bar{\otimes}M,
\end{align*}
for $a,b,c\in M$. We extend this notation to amplifications, so that for $e_{i,j}\otimes a\otimes b \in  M_{\infty}(M\bar{\otimes}M)$, we have \[(e_{i,j}\otimes a\otimes b)_{13} = e_{i,j}\otimes a\otimes 1 \otimes b \in M_{\infty}(M\bar{\otimes}M\bar{\otimes}M)\] for  $a,b\in M$ and $i,j \in \mathbb{N}$.

 We refer to \cite{anantharaman2017introduction} for the fundamental facts about (tracial) von Neumann algebras used in this paper.

\subsection{Measurable operators}
Fix a faithful semi-finite normal tracial weight $\tau$ on a semi-finite von Neumann algebra $M$ and consider the Hilbert space $L^{2}(M.\tau)$ obtained by the GNS construction. An operator $A$ on  $L^{2}(M.\tau)$ is called $\tau$-measurable if it is a densely defined closed operator affiliated with $M$ and the spectral decomposition  $ |A|=\int_{0}^{\infty}\lambda \mathrm{d}e(\lambda)$ of $|A|$ satisfies $\lim_{\lambda \to \infty}\tau(e(\lambda)^{\bot})=0$. The space of $\tau$-measurable operators forms a ${*}$-algebra, and we will freely take adjoints, sums and products of $\tau$-measurable operators. Here, the sum  $A+B$ (respectively the product $AB$) of two $\tau$-measurable operators $A,B$ is the closure of the algebraic sum (respectively the product). Note that elements of $L^{2}(M.\tau)$ can canonically be identified with a  $\tau$-measurable operator. We refer to \cite[Chapter IX, $\S 2$]{takesaki2013theory} for these facts. We use the facts in this section especially when $M=M_{\infty}(N)$ and $\tau=\mathrm{Tr}\otimes \tau_{N}$ where $(N,\tau_{N})$ is a tracial von Neumann algebra and $\mathrm{Tr}$ is the usual tracial weight on $B(l^{2}(\mathbb{N}))$.

\subsection{Popa's intertwining technique}
In \cite{popa2003strrgd}, Popa introduced a powerful criterion to identify intertwiners between subalgebras of tracial von Neumann algebras which is now called Popa's intertwining-by-bimodules technique.
\begin{thm}(\cite{popa2003strrgd})
Let M be a tracial von Neumann algebra and $P,Q\subset M$ be two subalgebras. Then the following conditions are equivalent.
\begin{enumerate}
\item For any subgroup $\mathcal{G}\subset \mathcal{U}(P)$ with $\mathcal{G}''=P$, there is no net $(u_{n})_{n}\subset \mathcal{G}$ satisfying $\|E_{Q}(xu_{n}y)\|_{2} \to 0$ for all $x,y\in M$.
\item There exist non-zero projections $p\in P(P),q\in P(Q)$, a ${*}$-homomorphism $\theta:pPp\to qQq$ and a non-zero partial isometry $v\in pMq$ such that $xv=v\theta(x)$ holds for all $x\in pPp$.
\end{enumerate}
If the above conditions are satisfied, we say that a corner of $P$ embeds into $Q$ inside $M$, and write $P\prec_{M}Q$.
\end{thm}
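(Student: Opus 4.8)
The plan is to interpose between conditions (1) and (2) a third, manifestly $\mathcal{G}$-independent condition phrased in the Jones basic construction, and prove the two resulting equivalences. Write $\langle M,e_Q\rangle$ for the basic construction of $Q\subset M$, the von Neumann algebra on $L^2(M)$ generated by $M$ and $e_Q$; it carries a canonical faithful normal semifinite tracial weight $\mathrm{Tr}$ determined by $\mathrm{Tr}(xe_Qy)=\tau(xy)$ for $x,y\in M$, in which $Me_QM$ is a weakly dense ideal, and I write $\|\cdot\|_{2,\mathrm{Tr}}$ for the associated $L^2(\langle M,e_Q\rangle,\mathrm{Tr})$-norm. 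The intermediate condition is:
\begin{equation*}
\text{(3)}\qquad \text{there exists a nonzero positive } T\in P'\cap\langle M,e_Q\rangle \text{ with } \mathrm{Tr}(T)<\infty .
\end{equation*}
Since (3) refers to no generating group, proving $(1)\Leftrightarrow(3)\Leftrightarrow(2)$ will also show that the net condition in (1) does not depend on the choice of $\mathcal{G}$. Throughout I will use $e_Qb=be_Q$ for $b\in Q$ and the identification $\langle M,e_Q\rangle=(JQJ)'\cap B(L^2(M))$.

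The analytic equivalence $(1)\Leftrightarrow(3)$ rests on the identity, valid for any finite $F\subset M$ and any $u\in\mathcal{U}(P)$,
\[
\sum_{x,y\in F}\|E_Q(xuy)\|_2^2=\mathrm{Tr}\bigl(R\,uSu^{*}\bigr),\qquad R=\sum_{x\in F}x^{*}e_Qx,\ \ S=\sum_{y\in F}ye_Qy^{*},
\]
where $R,S\ge0$ have finite $\mathrm{Tr}$. For $(3)\Rightarrow(1)$: after a spectral cut assume $T$ is a finite-trace projection, choose $F=F^{*}$ so that $R=S\to T$ in $\|\cdot\|_{2,\mathrm{Tr}}$ with $\|R\|$ bounded; since $T\in P'$ gives $uTu^{*}=T$, Cauchy--Schwarz for $\mathrm{Tr}$ yields $\mathrm{Tr}(RuSu^{*})\to\mathrm{Tr}(T^{2})>0$ uniformly in $u$, so for suitable $F$ the left-hand side is bounded below by a positive constant for every $u\in\mathcal{G}\subset\mathcal{U}(P)$, ruling out the net for every $\mathcal{G}$. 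For $(1)\Rightarrow(3)$: apply (1) to $\mathcal{G}=\mathcal{U}(P)$; were $\inf_{u}\mathrm{Tr}(RuSu^{*})=0$ for all $F$, a diagonal net over the directed set of pairs $(F,\varepsilon)$ would violate (1), so there are $F,\varepsilon$ with $\mathrm{Tr}(RuSu^{*})\ge\varepsilon$ for all $u$. Let $T$ be the unique element of minimal $\|\cdot\|_{2,\mathrm{Tr}}$-norm in the $\|\cdot\|_{2,\mathrm{Tr}}$-closed convex hull of $\{uSu^{*}:u\in\mathcal{U}(P)\}$; uniqueness forces $\mathrm{Ad}\,u\,(T)=T$, hence $T\in P'\cap\langle M,e_Q\rangle$ with $T\ge0$ and $\mathrm{Tr}(T)\le\mathrm{Tr}(S)<\infty$, while $\mathrm{Tr}(RT)\ge\varepsilon$ forces $T\neq0$.

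For $(2)\Rightarrow(3)$, given the intertwiner $(p,q,\theta,v)$ I may, after the standard normalisation, assume $v^{*}v=q\in Q$; then $ve_Qv^{*}$ is a projection $d$ with $\mathrm{Tr}(d)=\tau(q)<\infty$, and the relation $av=v\theta(a)$ together with $e_Qb=be_Q$ shows $ad=da$ for all $a\in pPp$, so $d\in(pPp)'\cap\langle M,e_Q\rangle$. To upgrade corner-centrality to $P$-centrality I pass to the orthogonal projection $T=e_H$ onto the $P$-$Q$-subbimodule $H=\overline{PvQ}\subset L^2(M)$; then $T\in P'\cap\langle M,e_Q\rangle$ and $T\ge d\neq0$. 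Finiteness of $\mathrm{Tr}(T)=\dim(H_Q)$ follows by covering the central support of $p$ by partial isometries $w_i\in P$ with $w_iw_i^{*}\le p$, using $w_i(pPp)v=w_iv\theta(pPp)\subset\overline{w_ivQ}$ to write $H=\sum_i\overline{w_ivQ}$, and summing the dimension estimates, whose total is controlled by $\tau(v^{*}v)<\infty$.

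The reverse implication $(3)\Rightarrow(2)$ is where I expect the real work. From $T$ I spectrally cut to a finite-trace projection $f\in P'\cap\langle M,e_Q\rangle$; its range $H=fL^2(M)$ is a $P$-$Q$-subbimodule with finite right dimension $\dim(H_Q)=\mathrm{Tr}(f)<\infty$. The left $P$-action defines a normal unital $*$-homomorphism into the right-$Q$-linear endomorphisms $\mathcal{L}_Q(H)$, which, because $H$ has finite right $Q$-dimension, is a corner of an amplification of $Q$; choosing a nonzero right-bounded vector $\xi=\hat v$ (so $v\in M$) and exploiting $P$-invariance of $f$, one reads off a projection $q\in Q$, a $*$-homomorphism $\theta\colon pPp\to qQq$, and the partial isometry $v\in pMq$ with $av=v\theta(a)$. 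Converting the Hilbert-module datum $(H,\dim(H_Q)<\infty)$ into this algebraic intertwiner, while tracking supports in the semifinite algebra $\langle M,e_Q\rangle$ and using the identification $e_Q\langle M,e_Q\rangle e_Q=Qe_Q\cong Q$, is the main obstacle; the first three paragraphs reduce everything to it.
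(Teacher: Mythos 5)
First, a point of reference: the paper does not prove this statement at all --- it is quoted verbatim from Popa's work as a known tool --- so your proposal can only be measured against the standard literature proof, whose architecture (interposing the condition (3) on a finite-trace positive element of $P'\cap\langle M,e_Q\rangle$, the identity $\sum_{x,y\in F}\|E_Q(xuy)\|_2^2=\mathrm{Tr}(RuSu^*)$, and the convex-hull averaging with the minimal $\|\cdot\|_{2,\mathrm{Tr}}$-norm element) you do reproduce correctly. Your treatment of $(1)\Leftrightarrow(3)$ is essentially right (the insistence on $R=S$ via $F=F^*$ is unnecessary and slightly awkward --- the estimate works for independent approximants $R\approx T\approx S$ --- but that is cosmetic).

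There is, however, a genuine gap in your $(2)\Rightarrow(3)$. You claim that $H=\overline{PvQ}$ has $\dim(H_Q)<\infty$, "summing the dimension estimates, whose total is controlled by $\tau(v^*v)$." This is false in general: the family of partial isometries $w_i\in P$ with $w_i^*w_i\le p$ covering the central support of $p$ may be \emph{infinite}, and since left multiplication by $w_i$ carries $\overline{vQ}$ isometrically onto $\overline{w_ivQ}$, each summand contributes a right dimension comparable to $\dim(\overline{vQ}_Q)$, not a quantity that telescopes against $\tau(v^*v)$. Concretely, for $P=\bigoplus_n M_{k_n}(\mathbb{C})c_n$ with $p=\sum_n p_n$, $p_n\le c_n$ minimal and $k_n\to\infty$ suitably, one can arrange an intertwiner for which $\dim(\overline{Pc_nvQ}_Q)\gtrsim k_n\,\tau(\theta(p_n))$ is not summable, so $\mathrm{Tr}(e_H)=\infty$ even though both (2) and (3) hold. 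The repair is a central weighting: decompose the central support $z$ of $p$ into countably many central projections $c_k$ on which $p$ is covered by \emph{finitely} many $w_{k,i}$, set $H_k=\overline{Pc_kvQ}$ (so $e_{H_k}\in P'\cap\langle M,e_Q\rangle$ with $\mathrm{Tr}(e_{H_k})<\infty$), and take $T=\sum_k 2^{-k}\bigl(1+\mathrm{Tr}(e_{H_k})\bigr)^{-1}e_{H_k}$; this is positive, nonzero, $P$-central and of finite trace. Relatedly, your "standard normalisation" $v^*v=q\in Q$ is not available --- in general one only has $v^*v\in\theta(pPp)'\cap qMq$ --- but this is harmless since $d=ve_Qv^*$ need only be positive, not a projection. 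Finally, note that your last paragraph does not prove $(3)\Rightarrow(2)$: you correctly identify the standard route (spectral cut to a finite-trace projection, the finite right-dimension bimodule, right-bounded vectors, realizing the left $P$-action in a corner of an amplification $qM_n(Q)q$) but explicitly defer its execution, and since this is the analytic heart of Popa's theorem, the proposal is incomplete there as well.
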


\subsection{Crossed products}
Given a trace preserving action of a countable discrete group $\Lambda$ on a tracial von Neumann algebra $(A,\tau_{A})$, we denote by $A\rtimes\Lambda$ the corresponding crossed product von Neumann algebra, which is tracial together with trace defined by $\tau(av_{h})=\delta_{e,h}\tau_{A}(a)$ for $a\in A$ and $h\in \Lambda$. Every crossed product algebra considered in this paper comes in this form (in particular, every action of a group on a tracial von Neumann algebra is assumed to be trace preserving).  For a crossed product von Neumann algebra  $M=A\rtimes\Lambda$, we define the ${*}$-homomorphism $\Delta:M\to L(\Lambda)\bar{\otimes}M$ by $\Delta(av_{h})=v_{h}\otimes av_{h}$ for $a \in A$ and $h \in \Lambda$. We also consider the basic construction $(\langle M,e_{A} \rangle,\hat{\tau})$. By definition, $\langle M,e_{A} \rangle$ is the commutant of $JAJ$ on $L^{2}(M)$ and $\hat{\tau}$ is the canonical tracial weight on $\langle M,e_{A} \rangle$ satisfying $\hat{\tau}(xe_{A}y)=\tau(xy)$ for $x,y\in M$.  The basic construction comes with a canonical operator valued weight $E$ from  $\langle M,e_{A} \rangle^{+}$ to the extended positive part of $M$ which satisfies $E(xe_{A}y)=xy$ for  $x,y\in M$. Since $\Sigma_{h\in \Lambda}Jv_{h}Je_{A}Jv_{h}^{*}J=1$, we have $E(x)=\Sigma_{h\in \Lambda}Jv_{h}JxJv_{h}^{*}J$ for $x\in \langle M,e_{A} \rangle^{+}$. We end this section with a lemma from  \cite[Proposition 7.2]{ioana2013class}. For a group $\Lambda$, we write $C_{\Lambda}(h)=\{k\in\Lambda|kh=hk\}$ for $h\in\Lambda$.

\begin{lem}
Let  $M=A\rtimes\Lambda$ be a crossed product obtained from a countable discrete group $\Lambda$ acting on $A$ and $\Delta :M\to L(\Lambda)\bar{\otimes}M$ defined as above. Let $B\subset M$ be a subalgebra and assume that $B \nprec_{M} A\rtimes C_{\Lambda}(h)$ holds for all $h\in \Lambda \setminus \{e\}$. Then, for any $\Delta(B)$-$\Delta(M)$-subbimodule $H\subset L^{2}( L(\Lambda)\bar{\otimes}M)$ that is finitely generated as a right $\Delta(M)$-module, we have $H\subset L^{2}(\Delta(M))$. In particular, for any subgroup $\mathcal{G}\subset \mathcal{U}(B)$ with $\mathcal{G}''=B$, the unitary representation $\{\mathrm{Ad}\Delta(x)\}_{x\in\mathcal{G}} $ on $L^{2} (L(\Lambda)\bar{\otimes}M)\ominus L^{2}(\Delta(M))$ is weakly mixing. \label{lem0}
\end{lem}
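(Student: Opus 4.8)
The plan is to analyze $L^{2}(L(\Lambda)\bar{\otimes}M)$ as a right $\Delta(M)$-module, decompose it according to the conjugacy classes of $\Lambda$, and then use the hypothesis $B\nprec_{M}A\rtimes C_{\Lambda}(h)$ together with Popa's intertwining theorem (in its basic-construction form) to rule out any finitely generated submodule lying outside $L^{2}(\Delta(M))$. Write $N=L(\Lambda)\bar{\otimes}M$. First I would set $H_{g}=(v_{g}\otimes 1)L^{2}(\Delta(M))$ for $g\in\Lambda$. Since $\Delta(M)$ is spanned by the $v_{h}\otimes av_{h}$, one computes $(v_{g}\otimes 1)(v_{h}\otimes av_{h})=v_{gh}\otimes av_{h}$, so that $H_{g}$ is exactly the closed span of the $v_{k}\otimes av_{l}$ with $kl^{-1}=g$; hence $L^{2}(N)=\bigoplus_{g\in\Lambda}H_{g}$ orthogonally, with $H_{e}=L^{2}(\Delta(M))$. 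Each $H_{g}$ is a right $\Delta(M)$-module, while the left action satisfies $\Delta(a)H_{g}=H_{g}$ for $a\in A$ and $\Delta(v_{h})H_{g}=H_{hgh^{-1}}$, i.e.\ the left $\Delta(M)$-action permutes the $H_{g}$ by conjugation. Grouping by conjugacy class $\mathcal{C}$, the spaces $H_{\mathcal{C}}=\bigoplus_{g\in\mathcal{C}}H_{g}$ are then $\Delta(M)$-$\Delta(M)$-subbimodules, with $H_{\{e\}}=L^{2}(\Delta(M))$.

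Next I would identify each nontrivial piece with a basic construction. Fix $g\neq e$ and put $Q=A\rtimes C_{\Lambda}(g)$. Using $hg=gh$ for $h\in C_{\Lambda}(g)$, a direct computation shows that under the right-module isomorphism $H_{g}\cong L^{2}(\Delta(M))$ the left $\Delta(Q)$-action is the standard one, so $H_{g}\cong{}_{\Delta(Q)}L^{2}(\Delta(M))_{\Delta(M)}$. Since $\Delta(v_{s})H_{g}=H_{sgs^{-1}}$, choosing coset representatives $s$ of $\Lambda/C_{\Lambda}(g)$ exhibits $H_{\mathcal{C}}$ as the bimodule induced from $\Delta(Q)$, so that, identifying $\Delta(M)\cong M$ through the trace-preserving isomorphism $\Delta$,
\[
H_{\mathcal{C}}\ \cong\ L^{2}(M)\otimes_{Q}L^{2}(M)\ \cong\ L^{2}\langle M,e_{Q}\rangle
\]
as $M$-$M$-bimodules. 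This identification is where I expect the \emph{main obstacle} to lie: one must check that the induction map is well defined on the relative tensor product (independent of the coset representatives, compatible with the $Q$-balancing) and is genuinely a bimodule isomorphism, which is the most delicate book-keeping in the argument.

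With this in hand the conclusion should follow. Given a $\Delta(B)$-$\Delta(M)$-subbimodule $H$, finitely generated as a right $\Delta(M)$-module and with $H\not\subset L^{2}(\Delta(M))$, I would observe that each orthogonal projection $P_{\mathcal{C}}$ onto $H_{\mathcal{C}}$ is a $\Delta(M)$-$\Delta(M)$-bimodule map, so $\overline{P_{\mathcal{C}}H}$ is again a $\Delta(B)$-$\Delta(M)$-subbimodule, still finitely generated as a right module, and nonzero for some nontrivial class $\mathcal{C}$ because $H\not\subset H_{\{e\}}$. Under the identification above, the orthogonal projection onto $\overline{P_{\mathcal{C}}H}$ corresponds to a nonzero projection in $B'\cap\langle M,e_{Q}\rangle$ of finite $\hat{\tau}$, the finiteness being exactly the finite generation of the module as a right module. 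By the basic-construction form of Popa's intertwining theorem this gives $B\prec_{M}Q=A\rtimes C_{\Lambda}(g)$ with $g\neq e$, contradicting the hypothesis; hence $H\subset L^{2}(\Delta(M))$.

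Finally, for the ``in particular'' clause I would argue by contradiction: a nonzero finite-dimensional $\{\mathrm{Ad}\,\Delta(x)\}_{x\in\mathcal{G}}$-invariant subspace $K\subset L^{2}(N)\ominus L^{2}(\Delta(M))$ generates, via an orthonormal basis $\xi_{1},\dots,\xi_{n}$, the right $\Delta(M)$-module $H=\overline{\sum_{i}\xi_{i}\Delta(M)}$. Invariance $\Delta(x)K\Delta(x)^{*}=K$ gives $\Delta(x)\xi_{i}\in H$ for $x\in\mathcal{G}$, hence $\Delta(x)H\subset H$, and since $\mathcal{G}''=B$ this upgrades to $\Delta(B)H\subset H$; thus $H$ is a finitely generated $\Delta(B)$-$\Delta(M)$-subbimodule containing $K$, so $H\not\subset L^{2}(\Delta(M))$, contradicting what was just proved. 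Therefore no such $K$ exists and the representation is weakly mixing.
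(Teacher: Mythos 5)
Your argument is correct, but it takes a genuinely different route from the paper's. You proceed structurally: you decompose $L^{2}(L(\Lambda)\bar{\otimes}M)=\bigoplus_{g}(v_{g}\otimes 1)L^{2}(\Delta(M))$, note that the left $\Delta(M)$-action permutes the summands by conjugation of the index, identify the block of a nontrivial conjugacy class of $g$ with the $M$-$M$-bimodule $L^{2}\langle M,e_{A\rtimes C_{\Lambda}(g)}\rangle$ (your computations of the left/right actions and of the $\Delta(Q)$-equivariance of $\xi\mapsto (v_{g}\otimes 1)\xi$ all check out), and then invoke the basic-construction formulation of Popa's criterion: a nonzero $B$-$M$-subbimodule of $L^{2}\langle M,e_{Q}\rangle$ that is finitely generated as a right $M$-module forces $B\prec_{M}Q$. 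That equivalence is standard but is not among the two conditions stated in the paper's Theorem 2.1, so you are implicitly importing a third form of the intertwining theorem, in addition to the induced-bimodule identification that you rightly flag as the delicate bookkeeping step. The paper argues computationally and more self-containedly: from $B\nprec_{M}A\rtimes C_{\Lambda}(h)$ it extracts a single sequence $(u_{n})\subset\mathcal{U}(B)$ with $\|E_{A\rtimes C_{\Lambda}(h)}(zu_{n}w)\|_{2}\to 0$ for all $z,w\in M$ and $h\neq e$, reduces to $x=v_{k}\otimes 1$, $y=v_{l}\otimes 1$ with $k,l\neq e$, and shows by a Fourier-coefficient computation that $\|E_{\Delta(M)}(x\Delta(u_{n})y)\|_{2}^{2}=\sum_{m:\,kml=m}\|a_{m}\|_{2}^{2}=\|E_{A\rtimes C_{\Lambda}(k)}(u_{n}v_{g}^{*})\|_{2}^{2}\to 0$, using that $\{m:kml=m\}$ is either empty or a coset $C_{\Lambda}(k)g$ --- which is exactly the coset structure your conjugacy-class decomposition encodes. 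Your approach buys a cleaner conceptual picture (and is closer to the original argument of Ioana--Popa--Vaes); the paper's buys brevity and avoids both the induction bookkeeping and the unproved third form of the criterion. Your deduction of the weak-mixing clause from the bimodule statement (building $H=\overline{\sum_{i}\xi_{i}\Delta(M)}$ from a finite-dimensional invariant subspace and upgrading $\Delta(\mathcal{G})H\subset H$ to $\Delta(B)H\subset H$ by weak closure) is the standard argument and is fine.
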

\begin{proof} From our assumption, we can take a sequence $\{u_{n}\}$ from $\mathcal{U}(B)$ such that $\|E_{A\rtimes C_{\Lambda}(h)}(zu_{n}w)\|_{2}\to 0$ holds for all $z,w\in M$ and $h\in \Lambda \setminus \{e\}$. It suffices to show that $\|E_{\Delta(M)}(x\Delta(u_{n})y)\|_{2}\to 0$ for all $x,y\in (L(\Lambda)\bar{\otimes}M)\ominus \Delta(M)$. We may moreover assume that $x,y$ are of the form $x=v_{k}\otimes 1,y=v_{l}\otimes 1$ with $k,l\in \Lambda \setminus \{e\}$. Writing $u_{n}=\Sigma_{m\in\Lambda}a_{m}v_{m}$, we have 
\begin{align*}
\|E_{\Delta(M)}(x\Delta(u_{n})y)\|_{2}^{2} &=\|\Sigma_{m\in\Lambda}E_{\Delta(M)}(v_{k}v_{m}v_{l}\otimes a_{m}v_{m})\|_{2}^{2} \\
&= \Sigma_{m\in\Lambda,kml=m}\|a_{m}\|_{2}^{2}.
\end{align*}
If there is no element $g\in \Lambda$ with $kgl=g$, the last expression in zero. If there is an element $g\in \Lambda$ with $kgl=g$, then for any $m\in \Lambda$ satisfying $kml=m$, we have $kmg^{-1}k^{-1}=mg^{-1}$ and thus $mg^{-1}\in C_{\Lambda}(k)$. Hence the last expression equals 
\begin{align*}
\Sigma_{m\in C_{\Lambda}(k)g}\|a_{m}\|_{2}^{2} = \|E_{A\rtimes C_{\Lambda}(k)}(u_{n}v_{g}^{*})\|_{2}^{2},
\end{align*}
which tends to zero since $k\neq e$.
\end{proof}

\section{PROOF OF THE MAIN THEOREM} 
We start with an elementary lemma in a formulation suited for our applications.
\begin{lem}
Let $N,M$ be two tracial von Neumann algebras and set $H= l^{2}(\mathbb{N})\otimes L^{2}(N)$.
\begin{enumerate}
\item The space of  Hilbert-Schmidt operators from $L^{2}(M)$ to $H$ (denoted as $B^{2}(L^{2}(M),H)$) is naturally isomorphic to $H\otimes L^{2}(M)$ through the following map:
\begin{align*}
\xi\otimes \eta \in H\otimes L^{2}(M) \mapsto [v\in L^{2}(M)\mapsto \langle \eta^{*}, v \rangle \xi \in H] \in B^{2}(L^{2}(M),H).
\end{align*}
\item Let $W\in H\otimes L^{2}(M) = l^{2}(\mathbb{N})\otimes L^{2}(N\bar{\otimes}M)$ and $V\in B^{2}(L^{2}(M),H)$ correspond to each other through the above isomorphism. Also view $W$ as an element of $L^{2}(M_{\infty}(N\bar{\otimes}M),\mathrm{Tr}\otimes\tau)$ (and thus as an $(\mathrm{Tr}\otimes \tau)$-measurable operator associated to $M_{\infty}(N\bar{\otimes}M)$) through the following isomorphism
\begin{align*}
\delta_{i}\otimes\eta\otimes\zeta\in l^{2}(\mathbb{N})\otimes L^{2}(N\bar{\otimes}M)\mapsto e_{i,0}\otimes \eta\otimes\zeta\in L^{2}(M_{\infty}(N\bar{\otimes}M),\mathrm{Tr}\otimes\tau)e_{0}.
\end{align*}
Then, identifying $N\bar{\otimes}M$ with $e_{0}M_{\infty}(N\bar{\otimes}M)e_{0}$, we have
\begin{itemize}
\item $\tau(W^{*}W)=\langle W,W\rangle_{H\otimes L^{2}(M)} =\mathrm{Tr}(V^{*}V) (<\infty)$,
\item $\langle y,V^{*}a^{op}Vz\rangle = \tau(W_{13}^{*}W_{12}(a\otimes z\otimes y^{*}))$,
\end{itemize}
for any $a\in N$ and $z,y\in M$ where elements of $N^{op}$ act on $H$ by natural right multiplication. 
\end{enumerate}\label{lem1}
\end{lem}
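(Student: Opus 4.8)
The plan is to treat the two parts in turn: part (1) is the standard identification of Hilbert--Schmidt operators with a Hilbert space tensor product (via the conjugate-linear $J$), and part (2), once the identifications are unwound, reduces to a bookkeeping computation closed off by the trace property of $\tau$. For part (1), writing $\eta^{*}=J\eta$, the proposed map sends an elementary tensor $\xi\otimes\eta$ to the rank-one operator $T_{\xi,\eta}=|\xi\rangle\langle\eta^{*}|$, i.e.\ $v\mapsto\langle\eta^{*},v\rangle\xi$, which is linear in both $\xi$ and $\eta$ (the anti-linearity of $\eta\mapsto\eta^{*}$ cancels that of the first slot of the inner product). I would then check it is isometric by computing the Hilbert--Schmidt inner product directly: since $T_{\xi,\eta}^{*}=|\eta^{*}\rangle\langle\xi|$, one gets $\langle T_{\xi_{1},\eta_{1}},T_{\xi_{2},\eta_{2}}\rangle_{HS}=\mathrm{Tr}(T_{\xi_{1},\eta_{1}}^{*}T_{\xi_{2},\eta_{2}})=\langle\xi_{1},\xi_{2}\rangle\langle\eta_{1},\eta_{2}\rangle$, using that $J$ is anti-unitary, and this is exactly $\langle\xi_{1}\otimes\eta_{1},\xi_{2}\otimes\eta_{2}\rangle_{H\otimes L^{2}(M)}$. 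As finite-rank operators are dense in $B^{2}(L^{2}(M),H)$ and elementary tensors span a dense subspace, the map extends to the asserted unitary isomorphism.

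For the first bullet of part (2), I would first note that the second displayed map $\delta_{i}\otimes\eta\otimes\zeta\mapsto e_{i,0}\otimes\eta\otimes\zeta$ is itself an isometry onto $L^{2}(M_{\infty}(N\bar{\otimes}M),\mathrm{Tr}\otimes\tau)e_{0}$: on an elementary tensor its squared $L^{2}$-norm is $(\mathrm{Tr}\otimes\tau)(e_{0,0}\otimes(\eta\otimes\zeta)^{*}(\eta\otimes\zeta))=\mathrm{Tr}(e_{0,0})\,\|\eta\otimes\zeta\|_{2}^{2}=\|\eta\|^{2}\|\zeta\|^{2}$, matching the ambient Hilbert space norm. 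Composing this with the isometry of part (1) yields $\tau(W^{*}W)=\langle W,W\rangle_{H\otimes L^{2}(M)}=\mathrm{Tr}(V^{*}V)$, the finiteness being automatic because $W$ is a genuine vector of $H\otimes L^{2}(M)$.

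For the second bullet the plan is a direct expansion, and by the bound $\|V\|_{HS}=\|W\|$ (so that both sides are continuous quadratic expressions in $W$) it suffices to carry it out for $W$ a finite sum $\sum_{i,\alpha}\delta_{i}\otimes c_{i\alpha}\otimes d_{i\alpha}$ with $c_{i\alpha}\in N$, $d_{i\alpha}\in M$, such sums being dense in $H\otimes L^{2}(M)$. Under part (1) the corresponding rank-one data are $\xi_{i\alpha}=\delta_{i}\otimes c_{i\alpha}$ and $\eta_{i\alpha}=d_{i\alpha}$; unwinding $V$, the right action $a^{op}(\delta_{i}\otimes c_{i\alpha})=\delta_{i}\otimes c_{i\alpha}a$, and $V^{*}$, together with $\langle\eta^{*},z\rangle=\tau_{M}(\eta z)$, gives $\langle y,V^{*}a^{op}Vz\rangle=\sum_{i,\alpha,\beta}\tau_{N}(c_{i\beta}^{*}c_{i\alpha}a)\,\tau_{M}(d_{i\alpha}z)\,\tau_{M}(y^{*}d_{i\beta}^{*})$. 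On the other side, reading off the leg placements gives $W_{13}^{*}W_{12}=e_{0,0}\otimes\sum_{i,\alpha,\beta}(c_{i\beta}^{*}c_{i\alpha})\otimes d_{i\alpha}\otimes d_{i\beta}^{*}$, and multiplying by the $(0,0)$-corner embedding $e_{0,0}\otimes a\otimes z\otimes y^{*}$ and applying $\mathrm{Tr}\otimes\tau$ yields $\sum_{i,\alpha,\beta}\tau_{N}(c_{i\beta}^{*}c_{i\alpha}a)\,\tau_{M}(d_{i\alpha}z)\,\tau_{M}(d_{i\beta}^{*}y^{*})$. The two agree by the trace identity $\tau_{M}(y^{*}d_{i\beta}^{*})=\tau_{M}(d_{i\beta}^{*}y^{*})$.

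The computations themselves are elementary; the real work, and the main obstacle, is the bookkeeping. One must keep the conjugate-linear $J$-identification and the right $N^{op}$-action consistent throughout, read the tensor-leg placements of $W_{12}$ and $W_{13}$ correctly, and justify the operations on $L^{2}$-measurable operators --- that $W_{13}^{*}W_{12}\in L^{1}$ so that $\mathrm{Tr}\otimes\tau$ is finite on it, and that the reduction to finite sums of elementary tensors followed by a continuity and density argument is legitimate.
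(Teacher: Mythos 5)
Your proposal is correct and follows essentially the same route as the paper: the paper also dismisses part (1) and the first bullet of (2) as standard, and proves the second bullet by computing both sides on elementary tensors $\delta_{i}\otimes\eta\otimes\zeta$ and extending by density and (sesqui)linearity, exactly as you do with finite sums. Your expanded computation, including the use of traciality $\tau_{M}(y^{*}d^{*})=\tau_{M}(d^{*}y^{*})$ to match the two sides, agrees with the paper's.
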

\begin{proof} We only prove $\langle y,V^{*}a^{op}Vz\rangle = \tau(W_{13}^{*}W_{12}(a\otimes z\otimes y^{*}))$ for  $a\in N$ and $z,y\in M$ (the other facts are well known and easy to verify). Let $W_{j}=\delta_{i_{j}}\otimes\eta_{j}\otimes\zeta_{j}$ for $j=1,2$ be two vectors in $ l^{2}(\mathbb{N})\otimes L^{2}(N\bar{\otimes}M)$ and $V_{j}\in B^{2}(L^{2}(M),H)$ be the corresponding  Hilbert-Schmidt operators. Then we have
\begin{align*}
\langle y,V_{1}^{*}a^{op}V_{2}z\rangle &= \langle V_{1}y,a^{op}V_{2}z\rangle \\
&= \langle \langle \zeta_{1}^{*},y\rangle \delta_{i_{1}}\otimes \eta_{1},\langle \zeta_{2}^{*},z\rangle \delta_{i_{2}}\otimes \eta_{2}a \rangle \\
&= \delta_{i_{1},i_{2}}\overline{\tau(\zeta_{1}y)}\tau(\zeta_{2}z)\tau(\eta_{1}^{*}\eta_{2}a) \\
&= \delta_{i_{1},i_{2}}\tau(\eta_{1}^{*}\eta_{2}a\otimes \zeta_{2}z\otimes y^{*}\zeta_{1}^{*}) \\
&= \delta_{i_{1},i_{2}}\tau((\eta_{1}^{*}\otimes 1\otimes \zeta_{1}^{*})(\eta_{2}\otimes\zeta_{2}\otimes 1)(a\otimes z\otimes y^{*})) \\
&= \tau((W_{1})_{13}^{*}(W_{2})_{12}(a\otimes z\otimes y^{*})).
\end{align*}
Since the linear span of vectors of the form $\delta_{i_{j}}\otimes\eta_{j}\otimes\zeta_{j}$ is dense in  $ l^{2}(\mathbb{N})\otimes L^{2}(N\bar{\otimes}M)$, we see that the above equation holds for any $W_{j}\in  l^{2}(\mathbb{N})\otimes L^{2}(N\bar{\otimes}M)$.
Thus, we obtain the second item.
\end{proof}

The following lemma gives the first step towards our main result.

\begin{lem}
Let $N, M$ be two tracial von Neumann algebras. For a unitary $Z\in \mathcal{U}(N\bar{\otimes}M\bar{\otimes}M )$, the following conditions are equivalent.
\begin{enumerate}
\item There exists a unitary $U\in\mathcal{U}(N\bar{\otimes}M)$ such that $Z=U_{13}^{*}U_{12}$ holds.
\item $Z_{134}Z_{123} = Z_{124}. $ 
\end{enumerate} \label{newlem}
\end{lem}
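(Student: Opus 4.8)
I would first dispose of the implication (1) $\Rightarrow$ (2) by direct computation. If $Z=U_{13}^{*}U_{12}$ with $U\in\mathcal{U}(N\bar{\otimes}M)$, then in $N\bar{\otimes}M\bar{\otimes}M\bar{\otimes}M$ the three amplifications of $Z$ read $Z_{123}=U_{13}^{*}U_{12}$, $Z_{134}=U_{14}^{*}U_{13}$ and $Z_{124}=U_{14}^{*}U_{12}$, so that
\[
Z_{134}Z_{123}=U_{14}^{*}U_{13}U_{13}^{*}U_{12}=U_{14}^{*}U_{12}=Z_{124},
\]
the middle cancellation being exactly the unitarity $U_{13}U_{13}^{*}=1$. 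This is the only place unitarity of $U$ is used, and it is the whole content of the forward implication.

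\textbf{The key identity for (2) $\Rightarrow$ (1).}
The heart of the matter is to manufacture $U$ out of $Z$. The guiding model is the group case: reading the legs $2,3,4$ as three ``variables'', condition (2) is the cocycle relation $Z(v_{3},v_{4})Z(v_{2},v_{3})=Z(v_{2},v_{4})$, whose solutions are the coboundaries $Z(y,z)=g(z)g(y)^{-1}$ obtained by freezing the first variable. I would implement ``freezing the first variable'' by slicing: for a normal state $\phi$ on $M$ set $g=(\mathrm{id}\otimes\phi\otimes\mathrm{id})(Z)\in N\bar{\otimes}M$. Applying the slice map $\mathrm{id}\otimes\phi\otimes\mathrm{id}\otimes\mathrm{id}$ to the position-$2$ leg of $Z_{134}Z_{123}=Z_{124}$, and using that $Z_{134}$ is trivial on that leg so the slice is a bimodule map over it, I obtain (after relabelling legs $(1,3,4)\mapsto(1,2,3)$) the clean relation $Z\,g_{12}=g_{13}$. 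Multiplying by adjoints gives $(g^{*}g)_{13}=g_{13}^{*}g_{13}=g_{12}^{*}Z^{*}Zg_{12}=(g^{*}g)_{12}$, and since an element of $N\bar{\otimes}M\bar{\otimes}M$ that is insensitive to which $M$-leg it occupies must be scalar on that leg, this forces $g^{*}g\in N\otimes\mathbb{C}1$. In particular, if the slice $g$ happens to be isometric, then $g$ is a unitary of the \emph{finite} algebra $N\bar{\otimes}M$ (an isometry in a tracial von Neumann algebra is a unitary), and then $Z=g_{13}g_{12}^{*}$, so that $U:=g^{*}$ satisfies $Z=U_{13}^{*}U_{12}$, as desired.

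\textbf{The main obstacle and how I would resolve it.}
The difficulty, which I expect to be the crux, is that no single normal slice $g$ need be isometric: $g^{*}g=p\otimes 1$ with $p\in N^{+}$ only a contraction, and $p$ may be far from $1$ (already in the toy case $N=\mathbb{C}$, $Z=U\otimes U^{*}$, one finds $g^{*}g=|\phi(U)|^{2}$, which can vanish). Choosing $\phi$ adapted to the sought-for $U$ is circular. The remedy is to assemble infinitely many slices into a single operator, which is precisely what the auxiliary space $l^{2}(\mathbb{N})$ of Lemma \ref{lem1} is for: I would stack the slices of $Z$ against an orthonormal basis of $L^{2}(M)$ into a vector $W\in l^{2}(\mathbb{N})\otimes L^{2}(N\bar{\otimes}M)$, equivalently a Hilbert--Schmidt operator $V\in B^{2}(L^{2}(M),H)$, so that the pairing formula $\langle y,V^{*}a^{\mathrm{op}}Vz\rangle=\tau(W_{13}^{*}W_{12}(a\otimes z\otimes y^{*}))$ of Lemma \ref{lem1} identifies $W_{13}^{*}W_{12}$ with $Z$. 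The cocycle identity (2) is exactly the associativity needed for the resulting form $a\mapsto V^{*}a^{\mathrm{op}}V$ to behave multiplicatively, and the unitarity of $Z$ should then force the assembled $W$ to be a genuine unitary $U\in\mathcal{U}(N\bar{\otimes}M)$ via the same ``$g^{*}g$ is central, hence an isometry is a unitary'' mechanism, now applied to the fully non-degenerate assembled object rather than to one slice. Carrying out this assembly and verifying unitarity of $U$ within the measurable-operator bookkeeping of Lemma \ref{lem1} is the step I expect to require the most care.
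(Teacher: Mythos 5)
Your easy direction and your single-slice heuristic are both correct, and you have correctly identified the paper's strategy: the relation $Zg_{12}=g_{13}$ for $g=(\mathrm{id}\otimes\phi\otimes\mathrm{id})(Z)$, the degeneracy obstruction for a single slice, and the role of Lemma \ref{lem1} as the device for assembling all slices at once. But the two steps you defer are precisely where the work lies, and your sketch of how they should go is not accurate enough to count as a proof. First, the construction of $W$ with $Z=W_{13}^{*}W_{12}$: the paper does not obtain $W$ by ``stacking slices against an orthonormal basis''; it shows that $a^{\mathrm{op}}\mapsto T_{a}$ (where $\langle y,T_{a}z\rangle=\tau(Z(a\otimes z\otimes y^{*}))$) is a \emph{normal completely positive} map with $T_{1}$ trace class, and takes a Stinespring dilation $T_{a}=V^{*}a^{\mathrm{op}}V$ with $V$ Hilbert--Schmidt. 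The cocycle identity enters here not as ``multiplicativity'' but as \emph{positivity}: one rewrites $\tau(Z(a_{ij}\otimes\xi_{j}\otimes\xi_{i}^{*}))$ as $\tau(Z_{134}^{*}Z_{124}(\cdots))=\langle\eta_{i},\eta_{j}(a_{ij}\otimes 1)\rangle$ with $\eta_{i}=(1\otimes\tau\otimes 1)(Z(1\otimes\xi_{i}\otimes 1))$, which is a Gram-matrix computation. Your phrase about the form ``behaving multiplicatively'' does not capture this, and without complete positivity you have no dilation and no $W$.

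Second, and more seriously, your claimed mechanism for upgrading $W$ to a unitary does not transfer. The assembled $W$ lives in $L^{2}(M_{\infty}(N\bar{\otimes}M))e_{0}$, i.e.\ in the \emph{semifinite} algebra $M_{\infty}(N\bar{\otimes}M)$, so the argument ``an isometry in a tracial von Neumann algebra is a unitary'' is simply unavailable there: an isometry from $e_{0}$ onto a projection $e$ of infinite amplification need not be onto $e_{0}$. The paper needs three further moves that your proposal omits entirely: (a) normalize $W$ to a partial isometry $\tilde{W}=E_{N\otimes 1}(WW^{*})^{1/2}W$ with $\tilde{W}^{*}\tilde{W}=e_{0}$ and check, using the cocycle identity again, that $\tilde{W}_{13}^{*}\tilde{W}_{12}=Z$ still holds; (b) prove $\tilde{W}\tilde{W}^{*}\in M_{\infty}(N)\otimes 1$ by the flip-symmetry argument $\tilde{W}_{12}\tilde{W}_{12}^{*}=\tilde{W}_{13}\tilde{W}_{13}^{*}$ (the analogue of your ``$g^{*}g$ is central'' observation, but applied to the \emph{left} support and requiring both inequalities obtained by swapping legs $2$ and $3$); and (c) show via the center-valued trace that $e=\tilde{W}\tilde{W}^{*}$ and $e_{0}$ are Murray--von Neumann equivalent \emph{inside $M_{\infty}(N)$}, so that a partial isometry $v\in M_{\infty}(N)$ (it must come from $M_{\infty}(N)$, not merely $M_{\infty}(N\bar{\otimes}M)$, or else the identity $U_{13}^{*}U_{12}=\tilde{W}_{13}^{*}\tilde{W}_{12}$ would be destroyed) moves $\tilde{W}$ into the corner $e_{0}M_{\infty}(N\bar{\otimes}M)e_{0}\cong N\bar{\otimes}M$, where it finally becomes a unitary. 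Since you explicitly leave all of this as ``the step requiring the most care'', the proposal is an outline of the right approach rather than a proof.
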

\begin{proof} 
We easily see that the first condition implies the second condition. To prove the converse, for each $a\in N$, consider the sesquilinear form $\sigma_{a}:(y,z) \in L^{2}(M)^{2}\mapsto \tau(Z (a\otimes z\otimes y^{*}))$. Since $|\tau(Z (a\otimes z\otimes y^{*})| \le  \|a\| \|y\|_{2} \|z\|_{2}$, the sesquilinear form $\sigma_{a}$ is bounded. Thus
for each  $a\in N$, there exists a unique operator $T_{a}\in B(L^{2}(M))$ such that $\sigma_{a}(y,z) =  \tau(Z (a\otimes z\otimes y^{*})) = \langle y,T_{a}z\rangle$ holds for every $y,z\in L^{2}(M)$. 

We now consider the map 
$T:a^{op}\in N^{op} \mapsto T_{a}\in B(L^{2}(M))$.
We prove that this map $T:N^{op} \to B(L^{2}(M))$ is a normal completely positive map. Take any $n\in \mathbb{N}$ and $[a_{ij}^{op}]_{ij} \in M_{n}(N^{op})_{+}$. We show that  $[T_{a_{ij}} ]_{ij}\in M_{n}(B(L^{2}(M)))$ is positive. For any $\xi=(\xi_{1},\dots, \xi_{n})\in L^{2}(M)^{n}$, we have 
\begin{align*}
\langle\xi,[T_{a_{ij}} ]_{ij} \xi\rangle &= \Sigma_{i,j} \langle\xi_{i},T_{a_{ij}} \xi_{j}\rangle \\
&= \Sigma_{i,j} \tau(Z(a_{ij}\otimes \xi_{j}\otimes\xi_{i}^{*})) \\
&= \Sigma_{i,j} \tau(Z_{134}^{*}Z_{124}(a_{ij}\otimes \xi_{j}\otimes\xi_{i}^{*}\otimes 1)),
\end{align*}
where we have used our assumption $Z_{134}Z_{123} = Z_{124}$ for the third equation. Setting \[\eta_{i} = (1\otimes\tau\otimes1)(Z(1\otimes \xi_{i} \otimes1)) \in L^{2}(N\bar{\otimes}M), \]
\[ \eta = (\eta_1, \dots, \eta_n) \in L^{2}(N\bar{\otimes}M)^{n}, \] we have 
\begin{align*}
\Sigma_{i,j} \tau(Z_{134}^{*}Z_{124}(a_{ij}\otimes \xi_{j}\otimes\xi_{i}^{*}\otimes 1)) &= \Sigma_{i,j}\tau(\eta_{i}^{*}\eta_{j}(a_{ij}\otimes1)) \\
&=\langle\eta,([a_{ij}^{op}]_{ij}\otimes1)\eta\rangle \ge 0,
\end{align*}
where elements of $N^{op}$ act naturally on $L^{2}(N)$ by right multiplication. This shows that $[T_{a_{ij}} ]_{ij}\in M_{n}(B(L^{2}(M)))$ is positive, and complete positivity of  $T:N^{op} \to B(L^{2}(M))$ follows. Normality of $T$ is obvious from the definition. We also have that $T_{1}\in B(L^{2}(M))$ is a trace class operator. Indeed, for a complete orthonormal basis $\{y_{i}\}_{\in I} \subset M$ of $L^{2}(M)$, we can compute
\begin{align*}
\Sigma_{i}\langle y_{i}, T_{1}y_{i}\rangle &= \Sigma_{i} \tau(Z(1\otimes y_{i}\otimes y_{i}^{*})) \\
&= \Sigma_{i} \tau(Z_{134}^{*}Z_{124}(1\otimes y_{i}\otimes y_{i}^{*}\otimes 1))\\
&= \Sigma_{i} \| (1\otimes \tau \otimes 1)(Z(1\otimes y_{i}\otimes 1))\|_{2}^{2}\\
&= \|Z\|_{2}^{2} = 1.
\end{align*}

We can now take the Stinespring dilation $(\pi,H,V)$ of $T$, that is, a Hilbert space $H$, a normal representation $\pi: N^{op}\to B(H)$, and a bounded linear operator $V:L^{2}(M)\to H$ such that $T_{a} = V^{*}\pi(a^{op})V$ holds for all $a\in  N$. Viewing $H$ as a right $ N$-module by $\pi$, we may assume that $H=l^{2}(\mathbb{N})\otimes L^{2}(N)$ and $\pi(a^{op})$ is the right multiplication of $a\in N$ on $l^{2}(\mathbb{N})\otimes  L^{2}(N)$, so hereafter we omit $\pi$ and write $T_{a}= V^{*}a^{op}V$. Since $T_{1} = V^{*}V$ is a trace class operator, we see that $V$ is a  Hilbert-Schmidt operator from $L^{2}(M)$ to $H$. 

Take \[W\in H\otimes L^{2}(M) = l^{2}(\mathbb{N})\otimes L^{2}(N\bar{\otimes}M)\cong  L^{2}(M_{\infty}(N\bar{\otimes}M))e_{0}\] which corresponds to $V\in B^{2}(L^{2}(M),H)$ through the isomorphism in Lemma \ref{lem1} and view $e_{0}M_{\infty}(N\bar{\otimes}M)e_{0} \cong N\bar{\otimes}M$ so that we have  $\tau(W^{*}W)=\mathrm{Tr}(V^{*}V)<\infty$. Using the same lemma, we have
\begin{align*}
\tau(Z (a\otimes z\otimes y^{*})) &= \langle y,T_{a}z\rangle \\
&=  \langle y,V^{*}a^{op}Vz\rangle \\
&= \tau(W_{13}^{*}W_{12}(a\otimes z\otimes y^{*}))
\end{align*}
for all $z,y\in M$ and $a\in N$. Thus we have \[Z=W_{13}^{*}W_{12}\in N\bar{\otimes}M\bar{\otimes}M\cong e_{0}M_{\infty}(N\bar{\otimes}M\bar{\otimes}M)e_{0}.\] Since we have $W_{12}^{*}W_{13}W_{13}^{*}W_{12}  = Z^{*}Z= e_{0}\in M_{\infty}(\mathbb{C})$, applying $E_{N\bar{\otimes}M\otimes 1}$ (the amplified conditional expectation  $M_{\infty}(N\bar{\otimes}M\bar{\otimes}M)\to M_{\infty}(N\bar{\otimes}M\otimes1)$ and the extension to the extended positive parts of the two algebras) to both sides, we obtain
$ W^{*}E_{N\otimes 1}(WW^{*})W =e_{0}$.  From this, we see that 
\[\tilde{W} = E_{N\otimes 1}(WW^{*})^{1/2}W\in M_{\infty}(N\bar{\otimes}M)\] is a partial isometry with $\tilde{W}^{*}\tilde{W}=e_{0}$.
Now, we have \[W_{14}^{*}W_{13}W_{13}^{*}W_{12}=Z_{134}Z_{123}=Z_{124},\]  and thus \[W_{14}^{*}E_{N\otimes 1}(WW^{*})_{12}W_{12} = Z_{124},\] which shows $\tilde{W}_{13}^{*}\tilde{W}_{12}= Z$. We now have \[\tilde{W}_{12}\tilde{W}_{12}^{*}\tilde{W}_{13}\tilde{W}_{13}^{*}\tilde{W}_{12}\tilde{W}_{12}^{*} = \tilde{W}_{12}\tilde{W}_{12}^{*}\in P( M_{\infty}(N)\bar{\otimes}M\bar{\otimes}M),\] which implies $\tilde{W}_{12}\tilde{W}_{12}^{*} \ge \tilde{W}_{13}\tilde{W}_{13}^{*}$. Flipping the second and third components of $N\bar{\otimes}M\bar{\otimes}M$, we also see that  $\tilde{W}_{12}\tilde{W}_{12}^{*} \le \tilde{W}_{13}\tilde{W}_{13}^{*}$ holds, so we have $\tilde{W}_{12}\tilde{W}_{12}^{*}=\tilde{W}_{13}\tilde{W}_{13}^{*}$. This shows $\tilde{W}\tilde{W}^{*}\in M_{\infty}(N)\bar{\otimes}1$, so we can write  $\tilde{W}\tilde{W}^{*}=e$ where $e\in P(M_{\infty}(N))$. Since $\tilde{W}^{*}\tilde{W}=e_{0}$ in $M_{\infty}(N\bar{\otimes}M)$, applying the center valued tracial weight, we have \[(\mathrm{Tr}\otimes E_{Z(N)})(e)=(\mathrm{Tr}\otimes E_{Z(N)}\otimes E_{Z(M)})(e)=(\mathrm{Tr}\otimes E_{Z(N)}\otimes E_{Z(M)})(e_{0})=(\mathrm{Tr}\otimes E_{Z(N)})(e_{0}),\]
which shows that  $e$ and $e_{0}$ are equivalent projections in $M_{\infty}(N)$. Hence we can take a partial isometry $v\in M_{\infty}(N)$ with $vv^{*}=e_{0}$ and $v^{*}v=e$. Setting $U=(v\otimes 1)\tilde{W}\in e_{0}M_{\infty}(N\bar{\otimes}M)e_{0} \cong N\bar{\otimes}M$, we have  that $U$ is a unitary and $Z=\tilde{W}_{13}^{*}\tilde{W}_{12}=\tilde{W}_{13}^{*}(v^{*}v\otimes1 \otimes1)\tilde{W}_{12}=U_{13}^{*}U_{12}$. 
\end{proof}

\begin{prop}Let $M=A\rtimes\Lambda$ be a crossed product $\mathrm{II}_{1}$ factor and $\mathcal{G} \subset \mathcal{U}(M)$ a subgroup of the unitary group of $M$. Assume that the following conditions hold. 
\begin{itemize}
\item $\inf_{x\in \mathcal{G}}\sup_{h\in\Lambda}\|E_{A}(xv_{h}^{*})\|_{2} >0$.
\item The unitary representation $\{\mathrm{Ad} x\}_{x \in \mathcal{G}}$ on $L^{2}(M)\ominus \mathbb{C}$ is weakly mixing.
\item The action $\{\mathrm{Ad}\Delta(x)\}_{x\in \mathcal{G}}$ on $L(\Lambda)\bar{\otimes}M $ is ergodic.
\end{itemize}
Then there exist a unitary $U\in \mathcal{U}(L(\Lambda)\bar{\otimes}M)$ and a unitary representation $\{w_{x}\}_{x\in \mathcal{G}}\subset \mathcal{U}(L(\Lambda))$ of $\mathcal{G}$ with $U\Delta(x)U^{*} = w_{x}\otimes x$ for all $x\in \mathcal{G}$.   \label{prop1}
\end{prop}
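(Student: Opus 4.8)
Here is how I would approach the proof.

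The plan is to reformulate the conclusion as the existence of a single fixed vector for an auxiliary representation on the doubled algebra $L(\Lambda)\bar{\otimes}M\bar{\otimes}M$, and then to extract $U$ from it via Lemma \ref{newlem}. First I would introduce the unitary representation $\Theta$ of $\mathcal{G}$ on $L^{2}(L(\Lambda)\bar{\otimes}M\bar{\otimes}M)$ given by
\[\Theta(x)\zeta = \Delta(x)_{13}(1\otimes x\otimes1)\,\zeta\,(1\otimes1\otimes x^{*})\Delta(x)_{12}^{*}.\]
Since $x\mapsto\Delta(x)_{13}(1\otimes x\otimes1)$ and $x\mapsto\Delta(x)_{12}(1\otimes1\otimes x)$ are homomorphisms into $\mathcal{U}(L(\Lambda)\bar{\otimes}M\bar{\otimes}M)$, this is a genuine representation. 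The point of $\Theta$ is the following: if $Z\in\mathcal{U}(L(\Lambda)\bar{\otimes}M\bar{\otimes}M)$ is a $\Theta$-fixed unitary satisfying the cocycle identity $Z_{134}Z_{123}=Z_{124}$, then Lemma \ref{newlem} produces $U\in\mathcal{U}(L(\Lambda)\bar{\otimes}M)$ with $Z=U_{13}^{*}U_{12}$; substituting this into $\Theta(x)Z=Z$ and cancelling gives, after writing $B=U\Delta(x)U^{*}=\sum_{g}v_{g}\otimes b_{g}$, the relation $B_{13}(1\otimes x\otimes1)=(1\otimes1\otimes x)B_{12}$, that is, $x\otimes b_{g}=b_{g}\otimes x$ for every $g$. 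This forces $b_{g}\in\mathbb{C}x$ and hence $B=w_{x}\otimes x$ with $w_{x}\in\mathcal{U}(L(\Lambda))$; that $\{w_{x}\}$ is a homomorphism is then automatic. So it suffices to construct such a $Z$, and the refinement ``$w_{x}\in L(\Lambda)$'' costs nothing.

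For the existence of a nonzero $\Theta$-fixed vector I would use the unit vector $\widehat{1}\in L^{2}(L(\Lambda)\bar{\otimes}M\bar{\otimes}M)$ and compute its diagonal matrix coefficient. Expanding $\Delta(x)=\sum_{h}v_{h}\otimes E_{A}(xv_{h}^{*})v_{h}$ and using $\tau=\tau_{L(\Lambda)}\otimes\tau_{M}\otimes\tau_{M}$, a direct calculation collapses all cross terms and yields
\[\langle\Theta(x)\widehat{1},\widehat{1}\rangle=\sum_{h\in\Lambda}\|E_{A}(xv_{h}^{*})\|_{2}^{4}\ \ge\ \sup_{h}\|E_{A}(xv_{h}^{*})\|_{2}^{4}.\]
By the first assumption this is bounded below by $\delta^{4}:=(\inf_{x}\sup_{h}\|E_{A}(xv_{h}^{*})\|_{2})^{4}>0$, uniformly in $x$. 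This is the decisive gain of passing to the doubled algebra: the $\inf\sup$ hypothesis is too weak to intertwine $\Delta(\mathcal{G})$ into $L(\Lambda)\bar{\otimes}A$ directly (the dominant Fourier mode may escape to infinity), but on $L(\Lambda)\bar{\otimes}M\bar{\otimes}M$ it becomes an honest positive-definite lower bound. Since $\mathrm{Re}\langle\,\cdot\,,\widehat{1}\rangle\ge\delta^{4}$ holds on the $\|\cdot\|_{2}$-closed convex hull of the orbit $\{\Theta(x)\widehat{1}\}$, the origin lies outside that hull, and its unique element $\zeta_{0}$ of minimal norm is a nonzero $\Theta$-fixed vector.

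It remains to upgrade $\zeta_{0}$ to a unitary obeying the cocycle identity, and this is where I expect the main difficulty to lie. Viewing $\zeta_{0}$ as a $\tau$-measurable operator and taking its polar decomposition $\zeta_{0}=Z|\zeta_{0}|$, the fixed-point relation $\Delta(x)_{13}(1\otimes x\otimes1)\zeta_{0}=\zeta_{0}\Delta(x)_{12}(1\otimes1\otimes x)$ makes $\zeta_{0}\zeta_{0}^{*}$ invariant under $\mathrm{Ad}(\Delta(x)_{13}(1\otimes x\otimes1))$ and $\zeta_{0}^{*}\zeta_{0}$ invariant under $\mathrm{Ad}(\Delta(x)_{12}(1\otimes1\otimes x))$. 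Each of these actions is a tensor product of the weakly mixing action $\mathrm{Ad}\,x$ on one $M$-leg with the ergodic action $\mathrm{Ad}\,\Delta(x)$ on $L(\Lambda)\bar{\otimes}M$; as a weakly mixing action tensored with an ergodic one is ergodic, both $\zeta_{0}\zeta_{0}^{*}$ and $\zeta_{0}^{*}\zeta_{0}$ are scalars, so a scalar multiple $Z$ of $\zeta_{0}$ is a $\Theta$-fixed unitary. The same principle, applied one leg higher on $L(\Lambda)\bar{\otimes}M\bar{\otimes}M\bar{\otimes}M$, shows that $Z_{124}^{*}Z_{134}Z_{123}$ is fixed by an ergodic action, hence a scalar of modulus one; that this scalar equals $1$, so that the cocycle identity holds exactly, is the most delicate point, and I would pin it down by slicing the fourth leg and using that each orbit vector $\Theta(x)\widehat{1}$ is itself of the form $W_{13}^{*}W_{12}$ and so satisfies the identity on the nose. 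With the cocycle identity established, Lemma \ref{newlem} and the first paragraph complete the proof.
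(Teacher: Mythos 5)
Your proposal follows essentially the same route as the paper: the fixed-point equation $\Delta(x)_{13}(1\otimes x\otimes 1)Z=Z\Delta(x)_{12}(1\otimes 1\otimes x)$ is exactly the paper's $(\Delta(x)\otimes x)_{132}Z=Z(\Delta(x)\otimes x)$, the lower bound $\sum_{h}\|E_{A}(xv_{h}^{*})\|_{2}^{4}\ge\delta^{4}$ and the minimal-norm element of the convex hull are the paper's construction of $Z_{0}$, the upgrade to a unitary via ergodicity of $\{\mathrm{Ad}(\Delta(x)\otimes x)\}_{x}$ is identical, and the passage from $Z=U_{13}^{*}U_{12}$ to $U\Delta(x)U^{*}=w_{x}\otimes x$ is the same computation. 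All of these steps are correct.

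The one soft spot is the step you yourself flag as delicate: showing that the scalar $\lambda\in\mathbb{T}$ with $Z_{134}Z_{123}=\lambda Z_{124}$ equals $1$. Your proposed route --- that each orbit vector $\Theta(x)\widehat{1}$ is of the form $W_{13}^{*}W_{12}$ (true, with $W=(1\otimes x)\Delta(x)^{*}$) and hence satisfies the identity exactly --- does not transfer to $Z$, because the identity $Y_{134}Y_{123}=Y_{124}$ is quadratic in $Y$ and is not preserved under convex combinations or weak limits. The paper closes this with a short linear argument using only data you already have: rewriting the relation as $Z_{123}=\lambda Z_{134}^{*}Z_{124}$ and applying $\tau$ gives $\tau(Z)=\lambda\,\tau(Z_{134}^{*}Z_{124})$; here $\tau(Z)\ge\delta^{4}>0$ because $\tau$ is linear and weakly continuous on the convex hull, while $\tau(Z_{134}^{*}Z_{124})=\|(1\otimes\tau\otimes 1)(Z)\|_{2}^{2}\ge 0$ for any $Z$, so $\lambda$ is a nonnegative real of modulus one, i.e.\ $\lambda=1$. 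With that substitution your argument matches the paper's proof in full.
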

\begin{proof} Set $\delta = \inf_{x\in \mathcal{G}}\sup_{h\in\Lambda}\|E_{A}(xv_{h}^{*})\|_{2} >0$. Take any $x\in \mathcal{G}$ and write $x = \Sigma_{h\in \Lambda} a_{h}v_{h}$. Then we have 
\begin{align*}
\tau((\Delta(x)\otimes x)_{132}(\Delta(x)\otimes x)^{*}) 
&= \Sigma_{h,k,l,m\in \Lambda} \tau((v_{h}\otimes a_{k}v_{k}\otimes a_{h}v_{h})(v_{l}\otimes a_{l}v_{l}\otimes a_{m}v_{m})^{*}) \\
&= \Sigma_{h\in \Lambda} \|a_{h}\|_{2}^{4}\ge \delta^{4} .
\end{align*}
Taking $Z_{0}\in L(\Lambda)\bar{\otimes}M\bar{\otimes}M $ to be the unique element of minimal $\|\cdot\|_{2}$-norm in  $\overline{\text{co}}^{\text{w.o}.}(\{(\Delta(x)\otimes x)_{132}(\Delta(x)\otimes x)^{*} | x \in \mathcal{G}\})$ (the weak operator closed convex hull of $\{(\Delta(x)\otimes x)_{132}(\Delta(x)\otimes x)^{*} | x \in \mathcal{G}\}$), the above inequality shows  $\tau(Z_{0}) > 0$. We also have $(\Delta(x)\otimes x)_{132}Z_{0}=Z_{0}(\Delta(x)\otimes x)$ for all $x\in \mathcal{G}$. Since we have $(\Delta(x)\otimes x)^{*}Z_{0}^{*}Z_{0}(\Delta(x)\otimes x) = Z_{0}^{*}Z_{0}$, and that the action $\{\mathrm{Ad} (\Delta(x)\otimes x)\}_{x \in \mathcal{G}}$ on $L(\Lambda)\bar{\otimes}M\bar{\otimes}M$ is ergodic, we see that $Z_{0}^{*}Z_{0} $ is a positive scalar. Thus, multiplying $Z_{0}$ by a positive scalar, we obtain a unitary $Z\in \mathcal{U}(L(\Lambda)\bar{\otimes}M\bar{\otimes}M )$ such that  $(\Delta(x)\otimes x)_{132}Z=Z(\Delta(x)\otimes x)$ holds for all $x\in \mathcal{G}$.

We show that this unitary $Z$ satisfies the second condition in the previous lemma. Using  $(\Delta(x)\otimes x)_{132}Z=Z(\Delta(x)\otimes x)$, a direct computation shows that \[Z_{134}Z_{123}Z_{124}^{*}\in \mathcal{U}(L(\Lambda)\bar{\otimes}M\bar{\otimes}M\bar{\otimes}M)\] commutes with $\Delta(x)_{14}(1\otimes x\otimes x \otimes 1) $ for all $x\in \mathcal{G}$. Now, the assumptions on the action of $\mathcal{G}$ shows  $Z_{134}Z_{123}Z_{124}^{*}\in\mathbb{T}$. Since $\tau (Z) >0$ and $\tau(Z_{134}^{*}Z_{124}) \ge 0$, this scalar must be $1$, and we have obtained $Z_{134}Z_{123} = Z_{124}$.

We now apply the previous lemma (with $L(\Lambda)=N$), and obtain a unitary $U\in \mathcal{U}(L(\Lambda)\bar{\otimes}M)$ such that $Z=U_{13}^{*}U_{12}$. Now, by $(\Delta(x)\otimes x)_{132}Z=Z(\Delta(x)\otimes x)$, we have $(U\Delta(x)U^{*}(1\otimes x^{*}))_{13}=(U\Delta(x)U^{*}(1\otimes x^{*}))_{12}$, which implies $U\Delta(x)U^{*}(1\otimes x^{*})\in M\otimes 1$ for all $x\in\mathcal{G}$. Writing $U\Delta(x)U^{*}(1\otimes x^{*})=w_{x}\otimes 1$ by $w_{x}\in \mathcal{U}(L(\Lambda))$, we have $U\Delta(x)U^{*}=w_{x}\otimes x$. This proves the proposition.
\end{proof}

We next show that  $U\Delta(x)U^{*} = w_{x}\otimes x$ for all $x\in \mathcal{G}$ (the conclusion of the previous proposition) with a few more assumptions roughly implies that $\mathcal{G}$ can be conjugated into $\mathcal{U}(A)\cdot\Lambda$.
\begin{lem}
Let $\sigma:\Lambda \curvearrowright A$ be a trace preserving action and $M=A\rtimes\Lambda$ the associated crossed product von Neumann algebra. Let $\mathcal{G} \subset \mathcal{U}(M)$ be a subgroup of the unitary group of $M$.
Assume that there exists a unitary  $U\in \mathcal{U}(M\bar{\otimes}M)$ and a unitary representation $\{w_{x}\}_{x\in \mathcal{G}}\in\mathcal{U}(M)$ of $\mathcal{G}$ such that the following conditions hold.
\begin{itemize}
\item $U\Delta(x)U^{*} = w_{x}\otimes x$ for all $x\in \mathcal{G}$. 
\item There exists a $\lambda\in \mathbb{T}$ such that $ ((1\otimes \Delta)(U^{*})(1\otimes U^{*}))_{213} = \lambda (1\otimes \Delta)(U^{*})(1\otimes U^{*})$.
\item $(\Delta\otimes 1)(U)(1\otimes \Delta)(U^{*})U_{23}^{*}\in \mathcal{U}(M\bar{\otimes}M\otimes 1)$.
\end{itemize}
Then there exists an $n\in  \mathbb{N}\cup \{\infty\}$,  a partial isometry $w\in M_{n,1}(M)$, a group homomorphism $\delta:\mathcal{G}\to \Lambda$, and a map  $x\in\mathcal{G}\mapsto a_{x}\in M_{n}(A)$ such that the following conditions hold.
\begin{itemize}
\item $w^{*}w=1, ww^{*} = e \in P(M_{n}(A))$.
\item $a_{x}a_{x}^{*} = e, a_{x}^{*}a_{x} = (\mathrm{Id}_{M_{n}}\otimes\sigma_{\delta(x)})(e)$  for all $x\in\mathcal{G}$.
\item $wxw^{*} = a_{x}(1_{n}\otimes v_{\delta(x)})$ for all $x\in\mathcal{G}$.
\end{itemize}  \label{newlem2}
\end{lem}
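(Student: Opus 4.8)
The plan is to push all the information onto the first tensor leg by extracting a ``twisted comultiplicativity'' relation for the representation $\{w_x\}$, and only at the very end transfer it back to the second leg in order to conjugate $x$ itself. First I would apply $\Delta\otimes\mathrm{id}$ and $\mathrm{id}\otimes\Delta$ to the defining relation $U\Delta(x)U^{*}=w_x\otimes x$. Using coassociativity of $\Delta$ (that is, $(\Delta\otimes\mathrm{id})\Delta=(\mathrm{id}\otimes\Delta)\Delta$), the two resulting identities can be combined to eliminate the common term $(\Delta\otimes\mathrm{id})\Delta(x)$, yielding
\[ \Delta(w_x)\otimes x=\Xi\,(w_x\otimes\Delta(x))\,\Xi^{*},\qquad \Xi:=(\Delta\otimes\mathrm{id})(U)\,(\mathrm{id}\otimes\Delta)(U^{*}). \]
Condition 3 says precisely that $\Omega:=\Xi\,U_{23}^{*}$ lies in $M\bar{\otimes}M\otimes 1$, so writing $\Omega=\Omega_{0}\otimes 1$, substituting $\Xi=\Omega\,U_{23}$, and applying the defining relation once more on the last two legs collapses the third leg and leaves the clean identity
\[ \Delta(w_x)=\Omega_{0}\,(w_x\otimes w_x)\,\Omega_{0}^{*}\qquad(x\in\mathcal{G}). \]
This is the key structural output: each $w_x$ is ``group-like'' for the comultiplication $\Delta$ twisted by the fixed unitary $\Omega_{0}$.

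Next I would use condition 2 to trivialize the twist $\Omega_{0}$. Applying $\Delta$ to the displayed identity in the two possible ways and invoking coassociativity shows that the associator built from $(\Delta\otimes\mathrm{id})(\Omega_{0})(\Omega_{0}\otimes 1)$ and $(\mathrm{id}\otimes\Delta)(\Omega_{0})(1\otimes\Omega_{0})$ commutes with $w_x\otimes w_x\otimes w_x$ for every $x$, which forces $\Omega_{0}$ to be a unitary $2$-cocycle for $\Delta$. The flip hypothesis of condition 2 (which in particular forces $\lambda^{2}=1$) supplies the symmetry needed to realize this cocycle as a coboundary: I would produce a unitary $q\in\mathcal{U}(M)$ with $\Omega_{0}=\Delta(q)\,(q\otimes q)^{*}$ and set $\tilde w_x=q^{*}w_x q$, so that $\Delta(\tilde w_x)=\tilde w_x\otimes\tilde w_x$. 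Since the only $\Delta$-group-like unitaries of $M=A\rtimes\Lambda$ are the canonical group unitaries $\{v_h\}_{h\in\Lambda}$, this gives $\tilde w_x=v_{\delta(x)}$ for a well-defined map $\delta\colon\mathcal{G}\to\Lambda$, and because $x\mapsto\tilde w_x$ is still a homomorphism, $\delta$ is a group homomorphism. Absorbing $q$ into $U$, I may then assume $w_x=v_{\delta(x)}$ and $U\Delta(x)U^{*}=v_{\delta(x)}\otimes x$.

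It remains to transfer this intertwining from the first leg to the second in order to conjugate $x$ itself. Here I would Fourier-expand $U$ along the group decomposition of the first leg, $L^{2}(M)=\bigoplus_{h\in\Lambda}\overline{Av_h}$, recording the coefficients $y_h=(\tau(v_h^{*}\,\cdot\,)\otimes\mathrm{id})(U)\in M$. Slicing the relation $U\Delta(x)=(v_{\delta(x)}\otimes x)U$ in the first leg turns it into a system of identities among the $y_h$, which, once organized into a column indexed by $h\in\Lambda$ (whence the amplification $M_{n}$ with $n\le|\Lambda|\le\infty$), is exactly the assertion that the resulting $w\in M_{n,1}(M)$ is a partial isometry with $w^{*}w=1$ and $ww^{*}=e\in P(M_{n}(A))$ satisfying $wxw^{*}=a_x(1_{n}\otimes v_{\delta(x)})$ with $a_x\in M_{n}(A)$. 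Unitarity of $U$ yields the two projection identities, and the $A$-valued coefficients assemble into the cocycle $a_x$ with $a_xa_x^{*}=e$ and $a_x^{*}a_x=(\mathrm{id}_{M_n}\otimes\sigma_{\delta(x)})(e)$.

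The main obstacle is the middle step: trivializing the $2$-cocycle $\Omega_{0}$ using only the flip symmetry of condition 2. Showing that the associator is scalar (so that $\Omega_{0}$ is genuinely a cocycle) requires that the triple representation $\{w_x\otimes w_x\otimes w_x\}_{x\in\mathcal{G}}$ leave no room in the relevant relative commutant, and converting the resulting symmetric cocycle into an honest coboundary is the delicate cohomological point on which the whole reduction to group-like $w_x$ rests. By contrast, deriving the twisted relation in the first paragraph is a formal coassociativity computation, and the final Fourier transfer, though notationally heavy, is routine bookkeeping once $w_x=v_{\delta(x)}$ is in hand; the appearance of a mere partial isometry and of $n\in\mathbb{N}\cup\{\infty\}$ in the conclusion is the natural reflection of the fact that this transfer is indexed by $\Lambda$ and need not have full support.
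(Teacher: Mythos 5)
Your first paragraph is a correct formal computation: from $U\Delta(x)U^{*}=w_{x}\otimes x$, coassociativity, and condition 3 one does get $\Delta(w_{x})=\Omega_{0}(w_{x}\otimes w_{x})\Omega_{0}^{*}$ (this is essentially the identity the paper records, in the form $((1\otimes\Delta)\Delta)(x)(1\otimes\Delta)(U^{*})(1\otimes U^{*})=(1\otimes\Delta)(U^{*})(1\otimes U^{*})(w_{x}\otimes w_{x}\otimes x)$, in the proof of the proposition that applies this lemma). The gap is the entire middle step. You assert that $\Omega_{0}$ is a unitary $2$-cocycle for $\Delta$ and that it can be written as a coboundary $\Omega_{0}=\Delta(q)(q\otimes q)^{*}$, but neither claim follows from the hypotheses of the lemma. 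To see that the associator of $\Omega_{0}$ is scalar you would need ergodicity of $\{\mathrm{Ad}(w_{x}\otimes w_{x}\otimes w_{x})\}_{x\in\mathcal{G}}$ on the relevant relative commutant, and the lemma assumes no mixing or ergodicity whatsoever --- it is a purely algebraic statement with exactly the three listed conditions. Worse, even for a genuine unitary $2$-cocycle for $\Delta$ on a crossed product there is no vanishing theorem making it a coboundary; you flag this as ``the delicate cohomological point'' but give no argument, and this is precisely where the proof has to happen. The paper never trivializes $\Omega_{0}$. Instead it passes to the basic construction: it sets $W=\Sigma_{h}v_{h}^{*}\otimes v_{h}e_{A}v_{h}^{*}$ and $X=WU^{*}\in\mathcal{U}(M\bar{\otimes}\langle M,e_{A}\rangle)$, uses condition 2 (via the identity $X_{23}X_{13}=\lambda X_{13}X_{23}$ and the trick $b=(\tau\otimes 1)(X(y\otimes 1))$, $bb^{*}=\lambda b^{*}b$, forcing $\lambda=1$) to show that the slices of $X$ generate an \emph{abelian} algebra $D$, disintegrates $X$ as a measurable family $\{X_{t}\}_{t\in T}$ of unitaries of $M$ over the spectrum of $D$, and uses condition 3 to see that $\Delta(X_{t}^{*})(X_{t}\otimes X_{t})$ is a.e.\ constant; evaluating at one point $t_{0}$ produces the conjugator $z=X_{t_{0}}$ with $zw_{x}z^{*}=v_{\delta(x)}$. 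Your use of condition 2 (only to get $\lambda^{2}=1$ and an unspecified ``symmetry'') misses its actual role, which is the commutativity of $D$.

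The final step is also not the routine Fourier bookkeeping you describe. The Fourier coefficients $y_{h}=(\tau(v_{h}^{*}\cdot)\otimes\mathrm{id})(U)\in M$ do not assemble into a column partial isometry with range projection in $M_{n}(A)$: to get entries $w_{i}$ with $w_{i}^{*}w_{i}\in P(A)$, $w_{i}^{*}w_{j}=0$, and $\Sigma_{i}w_{i}w_{i}^{*}=1$ one has to decompose the projection $p_{e}\in\langle M,e_{A}\rangle$ (a spectral projection of $D$ coming from $X=\Sigma_{h}v_{h}\otimes p_{h}$ after the correction by $z$, not a coefficient of $U$) over $e_{A}$ using the operator-valued weight $E$ and partial isometries $V_{i}$ with $V_{i}^{*}V_{i}\le e_{A}$; the identity $w^{*}(1_{n}\otimes e_{A})w=p_{e}$ together with $xp_{e}x^{*}=p_{\delta(x)^{-1}}$ and $M\cap\{e_{A}\}'=A$ is what yields $wxw^{*}\in M_{n}(A)\cdot(1_{n}\otimes v_{\delta(x)})$. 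In particular $n$ is not indexed by $\Lambda$ but by the decomposition of $p_{e}$ as a right $A$-module. So the proposal gets the opening identity right but is missing the construction that actually proves the lemma.
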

\begin{proof}
Set $W=\Sigma_{h\in\Lambda} v_{h}^{*}\otimes v_{h}e_{A}v_{h}^{*}\in \mathcal{U}(L(\Lambda)\bar{\otimes}\langle M,e_{A}\rangle)$ and $X=WU^{*}\in \mathcal{U}(M\bar{\otimes}\langle M,e_{A}\rangle)$. A direct computation shows $W^{*}(1\otimes y)W=\Delta(y)$ for all $y \in M$, which implies $(1\otimes x)X(1\otimes x^{*})=X(w_{x}\otimes 1)$ for $x\in \mathcal{G}$. We also have $(1\otimes Jv_{k}^{*}J)X (1\otimes Jv_{k}J) = (v_{k}\otimes 1)X$ for $k \in \Lambda$. Since $W_{23}^{*}W_{12}W_{13}=W_{12}W_{23}^{*}$, we have \[(\Delta\otimes 1)(X^{*})X_{13}X_{23}=(\Delta\otimes 1)(U)(1\otimes \Delta)(U^{*})U_{23}^{*}.\]

Writing the equality  $ ((1\otimes \Delta)(U^{*})(1\otimes U^{*}))_{213} = \lambda (1\otimes \Delta)(U^{*})(1\otimes U^{*})$ using $W$, we have $W_{13}^{*}U_{23}^{*}W_{13}U_{13}^{*} = \lambda W_{23}^{*}U_{13}^{*}W_{23}U_{23}^{*}$. Since $W_{23}$ and $W_{13}^{*}$ commute, we have $W_{23}U_{23}^{*}W_{13}U_{13}^{*} = \lambda W_{13}U_{13}^{*}W_{23}U_{23}^{*}$, that is, $X_{23}X_{13}=\lambda X_{13}X_{23}$. Take $y\in M$ such that  $b=(\tau\otimes 1)(X(y\otimes 1))$ is non zero. Multiplying by $y\otimes 1\otimes 1$ and applying $\tau \otimes 1\otimes 1$ to $X_{23}X_{13}=\lambda X_{13}X_{23}$, we have $X(1\otimes b )=\lambda (1\otimes b )X$. Taking adjoints, we also have $X(1\otimes b^{*} )=\lambda (1\otimes b^{*} )X$ and thus $bb^{*}=\lambda b^{*}b$. Since $b$ is non zero, we must have $\lambda=1$.

Let $D\subset \langle M,e_{A}\rangle$ be the von Neumann algebra generated by $\{ (\tau\otimes 1)(X(y\otimes 1))|y\in M\}$. Computing as in the above paragraph, and using $\lambda=1$, we see that any pair of elements in $\{ (\tau\otimes 1)(X(y\otimes 1))|y\in M\}\cup\{ (\tau\otimes 1)(X(y\otimes 1))|y\in M\}^{*}$ commute. Thus $D$ is an abelian von Neumann algebra. Moreover, by  $(1\otimes x)X(1\otimes x^{*})=X(w_{x}\otimes 1)$ for $x\in \mathcal{G}$ and $(1\otimes Jv_{k}^{*}J)X (1\otimes Jv_{k}J) = (v_{k}\otimes 1)X$  for $k \in \Lambda$, we see that $x\in\mathcal{G}$ and $Jv_{k}J$ for $k\in \Lambda$ normalizes $D$. Also, by definition, we have $X\in M\bar{\otimes}D$.

Take a measure space $(T,\mu)$ such that $D\cong L^{\infty}(T,\mu)$ holds. Viewing $X\in \mathcal{U}(M\bar{\otimes}D)$ as an $\mathcal{U}(M)$-valued function $X:t\in T \mapsto X_{t}\in\mathcal{U}(M)$, the condition  \[(\Delta\otimes 1)(X^{*})X_{13}X_{23}=(\Delta\otimes 1)(U)(1\otimes \Delta)(U^{*})U_{23}^{*}\in \mathcal{U}(M\bar{\otimes}M\otimes 1)\] shows that there is a conull subset $T_{0}\subset T$ such that $\Delta(X_{t}^{*})(X_{t}\otimes X_{t})$ is constant on $t\in T_{0}$. Take $t_{0}\in T_{0}$ and set $z=X_{t_{0}}\in \mathcal{U}(M)$. Then for $t\in T_{0}$, we have $\Delta(X_{t}^{*})(X_{t}\otimes X_{t})=\Delta(z^{*})(z\otimes z)$, thus $\Delta(zX_{t}^{*})=zX_{t}^{*}\otimes zX_{t}^{*}$, which implies $zX_{t}^{*} \in \{v_{h}\}_{h\in \Lambda}$. Thus, setting $Y=X(z^{*}\otimes 1)$, we have $X=Y(z\otimes 1)$ and that $Y\in\mathcal{U}(M\bar{\otimes}D)$ viewed as an $\mathcal{U}(M)$-valued function on $T$ takes values in $\{v_{h}\}_{h\in \Lambda}$ almost everywhere. From this, we can write $Y=\Sigma_{h\in \Lambda}v_{h}\otimes p_{h}$ with $p_{h}\in P(D)$ and $\Sigma_{h\in\Lambda}p_{h} = 1$. From $(1\otimes Jv_{k}^{*}J)X (1\otimes Jv_{k}J) = (v_{k}\otimes 1)X$ for $k \in \Lambda$, we have \[\Sigma_{h\in\Lambda}v_{h}\otimes Jv_{k}^{*}Jp_{h}Jv_{k}J=\Sigma_{h\in\Lambda}v_{kh}\otimes p_{h} = \Sigma_{h\in\Lambda} v_{h}\otimes p_{k^{-1}h},\] thus $Jv_{k}^{*}Jp_{h}Jv_{k}J=p_{k^{-1}h}$. Especially we have 
\begin{align*}
p_{k}=Jv_{k}Jp_{e}Jv_{k}^{*}J, \forall k\in \Lambda.
\end{align*}
Since $ \Sigma_{h\in\Lambda}p_{h}=1$, $p_{h}$ is non zero for every $h \in \Lambda$.

For each $x\in\mathcal{G}$, we have $(1\otimes x)X(1\otimes x^{*})=X(w_{x}\otimes 1)$ and thus $(1\otimes x)Y(1\otimes x^{*})=Y(zw_{x}z^{*}\otimes 1)$. Take $k\in\Lambda$ such that $p_{e}xp_{k}x^{*}\in P(D)$ is non zero. Multiplying the above equality by  $1\otimes p_{e}xp_{k}x^{*}\in P(D)$ , we obtain $v_{k}\otimes p_{e}xp_{k}x^{*}= zw_{x}z^{*}\otimes p_{e}xp_{k}x^{*}$, so $zw_{x}z^{*}=v_{k}$. This shows that we have 
\begin{align}
\{zw_{x}z^{*}\}_{x\in\mathcal{G}}\subset \{v_{h}\}_{h\in\Lambda},  \label{incl}
\end{align}
so there exists a group homomorphism $\delta:\mathcal{G}\to \Lambda$ such that $zw_{x}z^{*}=v_{\delta(x)}$ holds for all $x\in\mathcal{G}$. From $(1\otimes x)Y(1\otimes x^{*})=Y(zw_{x}z^{*}\otimes 1)$, we have \[\Sigma_{h\in\Lambda}v_{h}\otimes xp_{h}x^{*}=\Sigma_{h\in\Lambda}v_{h\delta(x)}\otimes p_{h} = \Sigma_{h\in\Lambda} v_{h}\otimes p_{h\delta(x)^{-1}},\] which implies
\begin{align*}
xp_{h}x^{*}=p_{h\delta(x)^{-1}}, \forall h\in\Lambda, \forall x\in \mathcal{G}.
\end{align*}

Consider the operator valued weight $E:\langle M,e_{A}\rangle\to M$ which satisfies $E(ve_{A}w)=vw$ for $v,w\in M$. We have  $E(p_{e})=\Sigma_{h\in\Lambda}Jv_{h}^{*}Jp_{e}Jv_{h}J=\Sigma_{h\in\Lambda}p_{h}=1$. Since the central support of $e_{A}$ in $\langle M,e_{A}\rangle$ is $1$, we can take a family of partial isometries $\{V_{i}\}_{i\in I}\in \langle M,e_{A}\rangle$ with $I=\{1,  2,\dots n\}$ for some $n\in \mathbb{N}\cup \{\infty\}$ such that $V_{i}^{*}V_{i} \le e_{A}$ and $\Sigma_{i} V_{i}V_{i}^{*}=p_{e}$ holds. Setting $w_{i}=V_{i}\hat{1}\in  L^{2}(M)$, we have $w_{i}e_{A}=V_{i}e_{A}$ and thus $V_{i}V_{i}^{*}=w_{i}e_{A}w_{i}^{*}$ and $\Sigma_{i}w_{i}e_{A}w_{i}^{*}=p_{e}$. Applying $E$ to this equality, we obtain $\Sigma_{i} w_{i}w_{i}^{*}=1$. Since $\{w_{i}e_{A}w_{i}^{*}\}_{i}$ is an orthogonal family of projections, we have $E_{A}(w_{i}^{*}w_{i})\in P(A)$ for all $i$ and $E_{A}(w_{i}^{*}w_{j})=0$ for  all $i\ne j$. Also, since $\{p_{h}\}_{h}$ is an orthogonal family. we see that $Jv_{h}Jw_{i}e_{A}w_{i}^{*}Jv_{h}^{*}J =w_{i}v_{h}^{*}e_{A}v_{h}w_{i}^{*}$ and $w_{j}e_{A}w_{j}^{*}$ are orthogonal projections for any $h \in \Lambda\setminus\{e\}$ and $i,j$. Thus we have $E_{A}(w_{j}^{*}w_{i}v_{h}^{*}) =0$ for all  $h \in \Lambda\setminus\{e\}$ and $i,j$. Altogether, we see that $w_{i}^{*}w_{i}\in P(A)$ for all $i$ and $w_{i}^{*}w_{j}=0$ for all $i\ne j$.

Now, set $w= \begin{pmatrix} w_{1},w_{2},\dots w_{n}\end{pmatrix}^{*}\in M_{n,1}(M)$. From the computations in the above paragraph, we have $w^{*}w=\Sigma_{i}w_{i}w_{i}^{*}=1$ and $ww^{*}=e\in P(M_{n}(A))$ where $e$ has $w_{i}^{*}w_{i}$ as its $(i,i)$-th entry for all $i$ and zero elsewhere. We also have $w^{*}(1_{n}\otimes e_{A})w=\Sigma_{i}w_{i}e_{A}w_{i}^{*}=p_{e}$. 

Coming back to the equality  $xp_{e}x^{*}=p_{\delta(x)^{-1}}$ for all $x\in\mathcal{G}$, we have 
\begin{align*}
xw^{*}(1_{n}\otimes e_{A})wx^{*} &=Jv_{\delta(x)}^{*}Jp_{e}Jv_{\delta(x)}J \\
&=w^{*}(1_{n}\otimes Jv_{\delta(x)}^{*}J)(1_{n}\otimes e_{A})(1_{n}\otimes Jv_{\delta(x)}J)w \\
&= w^{*}(1_{n}\otimes v_{\delta(x)}e_{A}v_{\delta(x)}^{*})w.
\end{align*}
From this equality, we see that $(1_{n}\otimes v_{\delta(x)}^{*})wxw^{*}$ commute with $1_{n}\otimes e_{A}$ for all  $x\in\mathcal{G}$. Since $M \cap \{e_{A}\}' = A$, this shows  $(1_{n}\otimes v_{\delta(x)}^{*})wxw^{*} \in M_{n}(A)$, and hence $wxw^{*}\in M_{n}(A)\cdot (1_{n}\otimes v_{\delta(x)})$ for all $x\in\mathcal{G}$. This proves the lemma.
\end{proof}

\begin{rem}
By the proof, we can take $e\in P(M_{n}(A))$ in the conclusion of the above lemma to have non zero entries only on the diagonal.
Moreover, if we have that the action $\{\mathrm{Ad}v_{h}\}_{h\in \delta(\mathcal{G})}$ on $Z(A)$ (the center of $A$) is ergodic, we can take $n$ to be one and $w\in\mathcal{U}(M), a_{x}\in \mathcal{U}(A)$ for all $x\in \mathcal{G}$.

Indeed, assume  that the action $\{\mathrm{Ad}v_{h}\}_{h\in \delta(\mathcal{G})}$ on $Z(A)$  is ergodic and $e$ is diagonal as above. Denote by $e_{i}\in P(A)$ the $(i,i)$-th entry of $e$. Consider the center-valued tracial weight $\mathrm{Tr}\otimes E_{Z(A)}$ on $M_{n}(A)$. Since we have $e=wxw^{*}ewx^{*}w^{*}=a_{x}(1_{n}\otimes v_{\delta(x)})e(1_{n}\otimes v_{\delta(x)})^{*}a_{x}^{*}$, applying  $\mathrm{Tr}\otimes E_{Z(A)}$, we obtain 
\begin{align*}(\mathrm{Tr}\otimes E_{Z(A)})(e)
&=(\mathrm{Tr}\otimes E_{Z(A)})(a_{x}(1_{n}\otimes v_{\delta(x)})e(1_{n}\otimes v_{\delta(x)})^{*}a_{x}^{*})\\
&=(\mathrm{Tr}\otimes E_{Z(A)})((1_{n}\otimes v_{\delta(x)})e(1_{n}\otimes v_{\delta(x)})^{*})\\
&=v_{\delta(x)}(\mathrm{Tr}\otimes E_{Z(A)})(e)v_{\delta(x)}^{*}.
\end{align*}
Hence $(\mathrm{Tr}\otimes E_{Z(A)})(e)$ is a $\delta(\mathcal{G})$-invariant operator in $L^{1}(Z(A))$ with trace one. By the ergodicity of $\delta(\mathcal{G})$, we obtain  $(\mathrm{Tr}\otimes E_{Z(A)})(e)=1$. This shows that we can take partial isometries $v_{i}\in A$ such that $v_{i}^{*}v_{i}=e_{i}$ and $\Sigma_{i}v_{i}v_{i}^{*}=1$ holds. Setting $v=\begin{pmatrix} v_{1},v_{2},\dots v_{n}\end{pmatrix}\in M_{1,n}(A)$, we have $vv^{*}=1, v^{*}v=e$ and \[vwxw^{*}v^{*}=va_{x}(1_{n}\otimes v_{\delta(x)})v^{*}=(va_{x}(1_{n}\otimes v_{\delta(x)})v^{*}v_{\delta(x)}^{*})v_{\delta(x)}\] for all $x\in \mathcal{G}$. Thus, setting $\tilde{w}=vw\in \mathcal{U}(M)$ and $\tilde{a_{x}}=va_{x}(1_{n}\otimes v_{\delta(x)})v^{*}v_{\delta(x)}^{*}\in \mathcal{U}(A)$, we have $\tilde{w}x\tilde{w}^{*}=\tilde{a_{x}}v_{\delta(x)}$ for all  $x\in \mathcal{G}$. \label{rem1}
\end{rem}
We can now prove the following proposition.
\begin{prop}
Let $\sigma:\Lambda \curvearrowright A$ be a trace preserving action and $M=A\rtimes\Lambda$ the associated crossed product $\mathrm{II}_{1}$ factor. Let $\mathcal{G} \subset \mathcal{U}(M)$ be a subgroup of the unitary group of $M$. Assume that the following conditions hold.
\begin{itemize}
\item The unitary representation $\{\mathrm{Ad}\Delta(x)\}_{x\in \mathcal{G}}$ on $L^{2}(L(\Lambda)\bar{\otimes}M) \ominus \mathbb{C}$ is weakly mixing.
\item   $\inf_{x\in \mathcal{G}}\sup_{h\in\Lambda}\|E_{A}(xv_{h}^{*})\|_{2} >0$.
\end{itemize}
Then there exists an $n\in  \mathbb{N}\cup \{\infty\}$,  a partial isometry $w\in M_{n,1}(M)$, a group homomorphism $\delta:\mathcal{G}\to \Lambda$, and a map  $x\in\mathcal{G}\mapsto a_{x}\in M_{n}(A)$ such that the following conditions hold.
\begin{itemize}
\item $w^{*}w=1, ww^{*} = e \in P(M_{n}(A))$.
\item $a_{x}a_{x}^{*} = e, a_{x}^{*}a_{x} = (\mathrm{Id}_{M_{n}}\otimes\sigma_{\delta(x)})(e)$  for all $x\in\mathcal{G}$.
\item $wxw^{*} = a_{x}(1_{n}\otimes v_{\delta(x)})$ for all $x\in\mathcal{G}$.
\end{itemize}  \label{mainprop}
\end{prop}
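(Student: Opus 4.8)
The plan is to obtain the conclusion by feeding the output of Proposition \ref{prop1} into Lemma \ref{newlem2} (together with Remark \ref{rem1}), so the work splits into producing the intertwiner $U$ and then checking the three hypotheses of Lemma \ref{newlem2} for it. First I would arrange the hypotheses of Proposition \ref{prop1}. The $\inf$-condition is assumed verbatim. Weak mixing of $\{\mathrm{Ad}\Delta(x)\}$ on $L^{2}(L(\Lambda)\bar{\otimes}M)\ominus\mathbb{C}$ immediately gives ergodicity of $\{\mathrm{Ad}\Delta(x)\}$ on $L(\Lambda)\bar{\otimes}M$, since a nonscalar fixed element would produce an invariant vector. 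For the remaining hypothesis, note that $\Delta:M\to L(\Lambda)\bar{\otimes}M$ is a trace-preserving unital $*$-homomorphism, hence extends to an isometry $L^{2}(M)\to L^{2}(L(\Lambda)\bar{\otimes}M)$ that carries $\mathbb{C}$ onto $\mathbb{C}$ and intertwines $\mathrm{Ad}\,x$ with $\mathrm{Ad}\Delta(x)$; thus $\{\mathrm{Ad}\,x\}$ on $L^{2}(M)\ominus\mathbb{C}$ is a subrepresentation of a weakly mixing one, so it is itself weakly mixing. Proposition \ref{prop1} then yields $U\in\mathcal{U}(L(\Lambda)\bar{\otimes}M)\subset\mathcal{U}(M\bar{\otimes}M)$ and a unitary representation $\{w_{x}\}\subset\mathcal{U}(L(\Lambda))\subset\mathcal{U}(M)$ with $U\Delta(x)U^{*}=w_{x}\otimes x$, which is exactly the first hypothesis of Lemma \ref{newlem2}.

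The heart of the matter is verifying the second and third hypotheses of Lemma \ref{newlem2}. Writing $\Delta^{(2)}=(\mathrm{id}\otimes\Delta)\Delta=(\Delta\otimes\mathrm{id})\Delta$ and substituting $\Delta(x)=U^{*}(w_{x}\otimes x)U$, coassociativity yields the two identities
\[\Delta^{(2)}(x)=Y\,(w_{x}\otimes w_{x}\otimes x)\,Y^{*},\qquad Y:=(1\otimes\Delta)(U^{*})(1\otimes U^{*}),\]
\[\Omega\,(w_{x}\otimes w_{x}\otimes x)\,\Omega^{*}=\Delta(w_{x})\otimes x,\qquad \Omega:=(\Delta\otimes1)(U)(1\otimes\Delta)(U^{*})U_{23}^{*}.\]
Both reduce to leg-numbering manipulations once one records $w_{x}\otimes\Delta(x)=(1\otimes U^{*})(w_{x}\otimes w_{x}\otimes x)(1\otimes U)=U_{23}^{*}(w_{x}\otimes w_{x}\otimes x)U_{23}$.

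To finish I would invoke weak mixing through the absorption principle (a weakly mixing representation tensored with an arbitrary representation has no nonzero invariant vectors). Conjugating by $U$ shows $\{\mathrm{Ad}(w_{x}\otimes x)\}$ is weakly mixing on $L^{2}(L(\Lambda)\bar{\otimes}M)\ominus\mathbb{C}$; a finite-dimensional invariant subspace of $\mathrm{Ad}\,w_{x}$ (resp. $\mathrm{Ad}\,x$) tensored with $\hat{1}$ would contradict this, so $\{\mathrm{Ad}\,w_{x}\}$ and $\{\mathrm{Ad}\,x\}$ are each weakly mixing on the respective $L^{2}\ominus\mathbb{C}$. For the second hypothesis, the symmetries $\Delta^{(2)}(x)_{213}=\Delta^{(2)}(x)$ and $(w_{x}\otimes w_{x}\otimes x)_{213}=w_{x}\otimes w_{x}\otimes x$ show that $Y^{*}Y_{213}$ commutes with every $w_{x}\otimes w_{x}\otimes x$; since invariant vectors of $\{\mathrm{Ad}(w_{x}\otimes w_{x}\otimes x)\}$ are pushed, leg by leg, into the scalars by absorption, this action is ergodic, forcing $Y^{*}Y_{213}\in\mathbb{C}$, i.e. $Y_{213}=\lambda Y$ with $\lambda\in\mathbb{T}$. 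For the third hypothesis, the second identity exhibits $\Omega$ as a fixed point of the representation $\xi\mapsto(\Delta(w_{x})\otimes x)\,\xi\,(w_{x}\otimes w_{x}\otimes x)^{*}$, which acts on the third leg exactly as $\mathrm{Ad}\,x$; absorption for this weakly mixing third leg forces the third leg of $\Omega$ to be trivial, so $\Omega\in M\bar{\otimes}M\otimes1$. With all three hypotheses in hand, Lemma \ref{newlem2} gives the stated $w$, $\delta$ and $a_{x}$.

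The main obstacle is precisely this pair of cocycle identities and the subsequent localization of the associators $Y^{*}Y_{213}$ and $\Omega$. The bookkeeping with tensor-leg positions under $(\Delta\otimes1)$, $(1\otimes\Delta)$ and the flip $(\cdot)_{213}$ is delicate and error-prone, and the decisive conceptual point is to recognize that coassociativity together with the symmetry of $\Delta^{(2)}$ and the equal-third-leg feature of the $\Omega$-identity reduce each hypothesis to an ergodicity statement that the weak mixing assumption—once propagated from $\mathrm{Ad}\Delta$ to $\mathrm{Ad}\,w_{x}$ and $\mathrm{Ad}\,x$—supplies through tensor absorption.
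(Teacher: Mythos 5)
Your proposal is correct and follows essentially the same route as the paper: apply Proposition \ref{prop1}, derive the two coassociativity/intertwining identities for $Y=(1\otimes\Delta)(U^{*})(1\otimes U^{*})$ and $\Omega=(\Delta\otimes1)(U)(1\otimes\Delta)(U^{*})U_{23}^{*}$, and use weak mixing/absorption to force $Y_{213}=\lambda Y$ and $\Omega\in M\bar{\otimes}M\otimes1$ before invoking Lemma \ref{newlem2}. The only (harmless) deviations are that you spell out the verification of the hypotheses of Proposition \ref{prop1} and upgrade $\{\mathrm{Ad}\,w_{x}\}$ to weakly mixing where the paper only records ergodicity.
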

\begin{proof} First, we apply Proposition \ref{prop1} and obtain  a unitary $U\in \mathcal{U}(L(\Lambda)\bar{\otimes}M)$ and a unitary representation $\{w_{x}\}_{x\in \mathcal{G}}\in \mathcal{U}(L(\Lambda))$ of $\mathcal{G}$ with $U\Delta(x)U^{*} = w_{x}\otimes x$ for all $x\in \mathcal{G}$.

We next check that this $U\in \mathcal{U}(L(\Lambda)\bar{\otimes}M)$ satisfies the conditions in Lemma \ref{newlem2}.
A direct computation shows  \[((1\otimes \Delta)\Delta)(x)(1\otimes \Delta)(U^{*})(1\otimes U^{*}) =(1\otimes \Delta)(U^{*})(1\otimes U^{*}) (w_{x}\otimes w_{x}\otimes x)\] for $x \in \mathcal{G}$, which also implies  \[((1\otimes \Delta)\Delta)(x)((1\otimes \Delta)(U^{*})(1\otimes U^{*}))_{213} =((1\otimes \Delta)(U^{*})(1\otimes U^{*}))_{213} (w_{x}\otimes w_{x}\otimes x)\] for $x \in \mathcal{G}$.  Since $\{\mathrm{Ad}\Delta(x)\}_{x\in \mathcal{G}}$ is weakly mixing on $L^{2}(L(\Lambda)\bar{\otimes}M) \ominus \mathbb{C}$, $\{\mathrm{Ad} (w_{x}\otimes x)\}_{x\in\mathcal{G}}$ is weakly mixing on $L^{2}(L(\Lambda)\bar{\otimes}M) \ominus \mathbb{C}$ and thus $\{\mathrm{Ad} w_{x}\}_{x\in\mathcal{G}}$ is ergodic on $L(\Lambda)$. This shows that $\{\mathrm{Ad} (w_{x}\otimes w_{x}\otimes x)\}_{x\in \mathcal{G}}$ is ergodic on $L(\Lambda)\bar{\otimes}L(\Lambda)\bar{\otimes}M$, which implies that there exists a $\lambda\in \mathbb{T}$ such that \[ ((1\otimes \Delta)(U^{*})(1\otimes U^{*}))_{213} = \lambda (1\otimes \Delta)(U^{*})(1\otimes U^{*})\] holds. 

Similarly, we see that \[(\Delta(w_{x})\otimes x)(\Delta\otimes 1)(U)(1\otimes \Delta)(U^{*})U_{23}^{*}=(\Delta\otimes 1)(U)(1\otimes \Delta)(U^{*})U_{23}^{*}(w_{x}\otimes w_{x}\otimes x)\] holds for all $x\in\mathcal{G}$. Since $\{\mathrm{Ad} x\}_{x \in \mathcal{G}}$ is weakly mixing on $L^{2}(M)\ominus \mathbb{C}$, this equality implies $(\Delta\otimes 1)(U)(1\otimes \Delta)(U^{*})U_{23}^{*}\in L(\Lambda)\bar{\otimes}L(\Lambda)\otimes 1$.

Now we can apply  Lemma \ref{newlem2} and obtain a partial isometry $w\in M_{n,1}(M)$, a group homomorphism $\delta:\mathcal{G}\to \Lambda$, and a map  $x\in\mathcal{G}\mapsto a_{x}\in M_{\infty}(A)$ such that required properties hold. 
\end{proof}

As a particular case of the previous proposition, we obtain the following theorem.

\begin{thm}
Let $M=A\rtimes\Lambda$ be a crossed product $\mathrm{II}_{1}$ factor with $\Lambda$ an i.c.c. group. Let $\mathcal{G} \subset \mathcal{U}(M)$ be a subgroup of the unitary group of $M$ which satisfies the following properties.
\begin{enumerate}
\item  The unitary representation $\{\mathrm{Ad} x\}_{x \in \mathcal{G}}$ on $L^{2}(M)\ominus \mathbb{C}$ is weakly mixing.
\item $\mathcal{G}''\nprec_{M}A\rtimes\Lambda_{0}$ for every infinite index subgroup $\Lambda_{0} \subset \Lambda$.
\item $\inf_{x\in \mathcal{G}}\sup_{h\in\Lambda}\|E_{A}(xv_{h}^{*})\|_{2} >0$.
\end{enumerate}
Then there exists an $n\in  \mathbb{N}\cup \{\infty\}$,  a partial isometry $w\in M_{n,1}(M)$, a group homomorphism $\delta:\mathcal{G}\to \Lambda$, and a map  $x\in\mathcal{G}\mapsto a_{x}\in M_{n}(A)$ such that the following conditions hold.
\begin{itemize}
\item $w^{*}w=1, ww^{*} = e \in P(M_{n}(A))$.
\item $a_{x}a_{x}^{*} = e, a_{x}^{*}a_{x} = (\mathrm{Id}_{M_{n}}\otimes\sigma_{\delta(x)})(e)$  for all $x\in\mathcal{G}$.
\item $wxw^{*} = a_{x}(1_{n}\otimes v_{\delta(x)})$ for all $x\in\mathcal{G}$.
\end{itemize}
If in addition we assume that the action of $\Lambda$ on $L^{2}(Z(A))\ominus \mathbb{C}$ is weakly mixing, there is a unitary
$w\in \mathcal{U}(M)$ such that $w\mathcal{G}w^{*}\subset \{av_{h}| a\in \mathcal{U}(A), h \in \Lambda\}$ holds. \label{theorem}
\end{thm}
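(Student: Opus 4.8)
The plan is to deduce the theorem from Proposition \ref{mainprop} (which already yields the displayed amplified conclusion) together with Remark \ref{rem1} (which removes the amplification when $\delta(\mathcal{G})$ acts ergodically on $Z(A)$). So the real work is only to feed the hypotheses of these two results from conditions (1)--(3) and the extra weak-mixing assumption.

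First I would establish the amplified conclusion by checking the two hypotheses of Proposition \ref{prop1}/\ref{mainprop}. The inf-sup condition is literally condition (3). For the weak mixing of $\{\mathrm{Ad}\Delta(x)\}_{x\in\mathcal{G}}$ on $L^{2}(L(\Lambda)\bar{\otimes}M)\ominus\mathbb{C}$, I would split this space into the two $\Delta(\mathcal{G})$-invariant orthogonal summands $L^{2}(\Delta(M))\ominus\mathbb{C}$ and $L^{2}(L(\Lambda)\bar{\otimes}M)\ominus L^{2}(\Delta(M))$. On the second summand I would invoke Lemma \ref{lem0}: since $\Lambda$ is i.c.c., the conjugacy class of any $h\neq e$ is infinite, so $C_{\Lambda}(h)$ has infinite index, and condition (2) gives $\mathcal{G}''\nprec_{M}A\rtimes C_{\Lambda}(h)$ for all $h\neq e$; the lemma then yields weak mixing there. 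On the first summand, $\Delta$ is a trace-preserving isomorphism of $M$ onto $\Delta(M)$ intertwining $\mathrm{Ad}x$ with $\mathrm{Ad}\Delta(x)$ and fixing $\mathbb{C}$, so weak mixing there is exactly condition (1). Since a direct sum of two weakly mixing representations is weakly mixing (a finite-dimensional subrepresentation would have equivariant, hence zero, projection onto each summand), I obtain weak mixing on the whole space, and Proposition \ref{mainprop} produces $n$, a partial isometry $w\in M_{n,1}(M)$, a homomorphism $\delta:\mathcal{G}\to\Lambda$, and $x\mapsto a_{x}\in M_{n}(A)$ with $wxw^{*}=a_{x}(1_{n}\otimes v_{\delta(x)})$.

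For the final refinement I would aim to apply Remark \ref{rem1}, which only asks that $\{\mathrm{Ad}v_{h}\}_{h\in\delta(\mathcal{G})}$ act ergodically on $Z(A)$. The main obstacle is exactly this ergodicity, since weak mixing of the full group $\Lambda$ need not pass to an arbitrary subgroup. The key observation I would exploit is that $wxw^{*}=a_{x}(1_{n}\otimes v_{\delta(x)})$ forces $w\mathcal{G}''w^{*}\subset eM_{n}(A\rtimes\delta(\mathcal{G}))e$ (where $e=ww^{*}$); reading $w^{*}\in M_{1,n}(M)$ as an intertwiner via $xw^{*}=w^{*}\theta(x)$ with $\theta(x)=wxw^{*}$, this says precisely $\mathcal{G}''\prec_{M}A\rtimes\delta(\mathcal{G})$. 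If $\delta(\mathcal{G})$ had infinite index in $\Lambda$, this would contradict condition (2), so $\delta(\mathcal{G})$ has finite index. I would then use that a finite-index subgroup of a weakly mixing action is automatically ergodic: a nonconstant $\delta(\mathcal{G})$-invariant vector in $L^{2}(Z(A))$ would have finite $\Lambda$-orbit (depending only on the coset in $\Lambda/\delta(\mathcal{G})$) and would thus span a nontrivial finite-dimensional $\Lambda$-invariant subspace, contradicting weak mixing of $\Lambda$ on $L^{2}(Z(A))\ominus\mathbb{C}$.

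Hence $\{\mathrm{Ad}v_{h}\}_{h\in\delta(\mathcal{G})}$ is ergodic on $Z(A)$, and Remark \ref{rem1} upgrades the data to $n=1$, $w\in\mathcal{U}(M)$ and $a_{x}\in\mathcal{U}(A)$, giving $w\mathcal{G}w^{*}\subset\{av_{h}\mid a\in\mathcal{U}(A),\,h\in\Lambda\}$. I expect the finite-index step — identifying the intertwining $\mathcal{G}''\prec_{M}A\rtimes\delta(\mathcal{G})$ and playing it against condition (2) — to be the crux; everything else is routine bookkeeping about weak mixing and the basic construction.
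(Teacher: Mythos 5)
Your proposal is correct and follows essentially the same route as the paper: the paper likewise verifies the hypothesis of Proposition \ref{mainprop} via Lemma \ref{lem0} (using that i.c.c.\ forces $C_{\Lambda}(h)$ to have infinite index) together with condition (1), then deduces $\mathcal{G}''\prec_{M}A\rtimes\delta(\mathcal{G})$ from the intertwining relation, uses condition (2) to get finite index, concludes ergodicity of $\delta(\mathcal{G})$ on $Z(A)$ from weak mixing of $\Lambda$, and invokes Remark \ref{rem1}. You merely spell out in more detail the orthogonal decomposition of $L^{2}(L(\Lambda)\bar{\otimes}M)\ominus\mathbb{C}$ and the finite-index-implies-ergodic step, which the paper leaves implicit.
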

\begin{proof} From the first and second items and Lemma \ref{lem0}, the unitary representation $\{\mathrm{Ad}\Delta(x)\}_{x\in \mathcal{G}}$ on $L^{2}(L(\Lambda)\bar{\otimes}M) \ominus \mathbb{C}$ is weakly mixing (note that subgroups of the form $C_{\Lambda}(h)$ for $h\in \Lambda \setminus \{e\}$ has infinite index in $\Lambda$ since $\Lambda$ is an i.c.c. group). Thus we can apply the previous proposition.

Assume now that the action of  $\Lambda$ on $L^{2}(Z(A))\ominus \mathbb{C}$ is weakly mixing,  From $wxw^{*} = a_{x}(1_{n}\otimes v_{\delta(x)})$ for all $x\in\mathcal{G}$, we easily see that  $\mathcal{G}''\prec_{M}A\rtimes\delta(\mathcal{G})$ holds, and by our second item, $\delta(\mathcal{G})$ has finite index in $\Lambda$. Combining with our assumption that the  action of $\Lambda$ on $L^{2}(Z(A))\ominus \mathbb{C}$ is weakly mixing, we see that the action of $ \delta(\mathcal{G})$ on $Z(A)$ is ergodic. Thus we can apply  Remark \ref{rem1} and obtain the desired unitary $w\in \mathcal{U}(M)$.
\end{proof}

\begin{proof}[Proof of Theorem \ref{mainthm}]
Set $\mathcal{G}=\{bu_{g}|b\in \mathcal{U}(B), g\in \Gamma\}  \subset \mathcal{U}(M)$. If we have $M =A\rtimes\Lambda= B\times\Gamma$, we automatically have $\mathcal{G}''=M\nprec_{M}A\rtimes\Lambda_{0}$ for every infinite index subgroup $\Lambda_{0} \subset \Lambda$. Thus we can apply the above theorem.
\end{proof}

\begin{rem}
Taking $A=\mathbb{C}$ and $\mathcal{G}=\Gamma$ for a discrete group satisfying $L(\Gamma)\subset L(\Lambda)$ in Theorem \ref{theorem}, we obtain \cite[Theorem 3.1]{ioana2013class}. See also \cite[Theorem 4.1]{krogager2017class}.
\end{rem}

Using  Lemma \ref{newlem2}, we also give a condition when we can deduce the existence of a unitary  $w\in \mathcal{U}(M)$ such that $w\mathcal{G}w^{*}\subset \mathbb{T}\{v_{h}\}_{h\in \Lambda}$ holds. We closely follow the proof of \cite[Theorem 3.1]{ioana2013class}. 

\begin{thm}
Let $M=A\rtimes\Lambda$ be a crossed product $\mathrm{II}_{1}$ factor with $\Lambda$ an i.c.c. group and $\mathcal{G} \subset \mathcal{U}(M)$ a subgroup of the unitary group of $M$ which satisfies the following properties.
\begin{enumerate}
\item The unitary representation $\{\mathrm{Ad}\Delta(x)\}_{x\in \mathcal{G}}$ on $L^{2}(M\bar{\otimes}M) \ominus \mathbb{C}$ is weakly mixing.
\item There exists a finite set $F\subset A$ such that $\inf_{x\in \mathcal{G}}\sup_{h\in\Lambda}\Sigma_{z\in F}|\tau(E_{A}(xv_{h}^{*})z)|^{2} >0$ holds.
\end{enumerate}
 Then there exists a unitary $w\in \mathcal{U}(M)$ such that $w\mathcal{G}w^{*}\subset \mathbb{T}\{v_{h}\}_{h\in \Lambda}$ holds.
\end{thm}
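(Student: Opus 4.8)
The plan is to reproduce the intertwiner construction underlying Propositions \ref{prop1} and \ref{mainprop}, and then to insert an IPV-style height argument, driven by the finiteness of $F$, which upgrades the conclusion from $\mathcal{U}(A)\cdot\Lambda$ to $\mathbb{T}\{v_h\}_{h\in\Lambda}$.

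First I would check that the hypotheses of Proposition \ref{prop1} are met. From $|\tau(E_A(xv_h^*)z)|\le\|E_A(xv_h^*)\|_2\|z\|_2$ one gets $\sum_{z\in F}|\tau(E_A(xv_h^*)z)|^2\le\|E_A(xv_h^*)\|_2^2\sum_{z\in F}\|z\|_2^2$, so condition 2 forces $\inf_x\sup_h\|E_A(xv_h^*)\|_2>0$. Condition 1 supplies the remaining weak mixing and ergodicity: weak mixing of $\{\mathrm{Ad}\Delta(x)\}$ on the larger space $L^2(M\bar\otimes M)\ominus\mathbb{C}$ restricts to the $\mathrm{Ad}\Delta(x)$-invariant subspace $L^2(L(\Lambda)\bar\otimes M)\ominus\mathbb{C}$, hence gives ergodicity of $\{\mathrm{Ad}\Delta(x)\}$ on $L(\Lambda)\bar\otimes M$, and (through the identification $\mathrm{Ad}\Delta(x)\sim 1\otimes\mathrm{Ad}x$ afforded by the unitary $W$ of Lemma \ref{newlem2}, which satisfies $W^*(1\otimes y)W=\Delta(y)$) likewise controls $\{\mathrm{Ad}x\}$ on $L^2(M)\ominus\mathbb{C}$. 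Proposition \ref{prop1} then provides $U\in\mathcal{U}(L(\Lambda)\bar\otimes M)$ and a unitary representation $\{w_x\}_{x\in\mathcal{G}}\subset\mathcal{U}(L(\Lambda))$ with $U\Delta(x)U^*=w_x\otimes x$ for all $x\in\mathcal{G}$.

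The heart of the matter, and the step I expect to be the main obstacle, is to show that the finiteness of $F$ forces each $w_x$ into $\mathbb{T}\{v_h\}_{h\in\Lambda}$; here I would follow the height argument of \cite[Theorem 3.1]{ioana2013class}. Introduce the $F$-height $h_F(y)=\sup_{h\in\Lambda}\left(\sum_{z\in F}|\tau(yv_h^*z)|^2\right)^{1/2}$, so that condition 2 reads $\inf_{x\in\mathcal{G}}h_F(x)=\delta>0$. The decisive properties are that an appropriate height is multiplicative under tensor products (so that $h(w_x\otimes x)$ factors as the group-height of $w_x$ times $h_F(x)$) and is essentially preserved by the comultiplication (so that $h(\Delta(x))$ is comparable to $h_F(x)$); combining these with the relation $U\Delta(x)U^*=w_x\otimes x$ and applying $\Delta$ once more --- which is exactly where the full strength of condition 1, weak mixing on $L^2(M\bar\otimes M)\ominus\mathbb{C}$ rather than merely on $L^2(L(\Lambda)\bar\otimes M)\ominus\mathbb{C}$, is used, since the iterated comultiplication lands in $M\bar\otimes M\bar\otimes M$ --- one shows that the height of $w_x$ is self-improving and must attain its maximal value. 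Because $F$ is finite, the dominant Fourier mass is confined to a \emph{fixed} finite-dimensional subspace of $L^2(A)$, and this is precisely what makes the boosting terminate at a single group element with a unimodular scalar: $w_x=\mu_x v_{\delta(x)}$ with $\mu_x\in\mathbb{T}$ and $\delta:\mathcal{G}\to\Lambda$ a homomorphism. This is the step that genuinely departs from the earlier theorems, whose conclusion stops at $\mathcal{U}(A)\cdot\Lambda$; the delicate point is to set up the correct tensor-multiplicative height on $M\bar\otimes M$ and to transport a lower bound faithfully through the \emph{fixed} intertwiner $U$, controlling the distortion introduced by $\mathrm{Ad}U$ (which naive height-invariance does not give, and which is instead handled by a maximality argument over $\mathcal{G}$ together with weak mixing).

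Finally I would feed the sharpened relation $U\Delta(x)U^*=\mu_x v_{\delta(x)}\otimes x$ into Lemma \ref{newlem2}. Its two remaining hypotheses --- the existence of $\lambda\in\mathbb{T}$ with $((1\otimes\Delta)(U^*)(1\otimes U^*))_{213}=\lambda(1\otimes\Delta)(U^*)(1\otimes U^*)$ and the cocycle condition $(\Delta\otimes1)(U)(1\otimes\Delta)(U^*)U_{23}^*\in\mathcal{U}(M\bar\otimes M\otimes1)$ --- are verified exactly as in the proof of Proposition \ref{mainprop}, using the weak mixing and ergodicity furnished by condition 1. Since the first leg $w_x$ is now a unimodular scalar times a group element, the element $z$ and the abelian algebra $D$ built in the proof of Lemma \ref{newlem2} degenerate (the decomposition $Y=\sum_h v_h\otimes p_h$ is driven by $zw_xz^*\in\{v_h\}$, which here is already forced up to the scalar $\mu_x$), so the resulting $a_x$ are scalar and $n=1$. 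One thus obtains a single unitary $w\in\mathcal{U}(M)$ with $wxw^*\in\mathbb{T}\{v_h\}_{h\in\Lambda}$ for every $x\in\mathcal{G}$, that is, $w\mathcal{G}w^*\subset\mathbb{T}\{v_h\}_{h\in\Lambda}$, as desired.
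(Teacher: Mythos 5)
Your reduction to Proposition \ref{prop1} is fine as far as it goes, but it discards the finiteness of $F$ at the outset (you only extract $\inf_x\sup_h\|E_A(xv_h^*)\|_2>0$ from condition 2), and the two steps that are supposed to recover it both fail. First, the ``IPV-style height argument'' showing $w_x\in\mathbb{T}\{v_h\}_{h\in\Lambda}$ is never actually carried out: you list the properties a height function would need (tensor multiplicativity, compatibility with $\Delta$, stability under $\mathrm{Ad}U$) and concede that the last one is not given by ``naive height-invariance,'' but you supply no argument for it. In \cite[Theorem 3.1]{ioana2013class} that control is the entire content of the proof and exploits the specific structure $M=L(\Lambda)$; transporting it to $w_x\in L(\Lambda)$ sitting inside $A\rtimes\Lambda$, with a height measured against a finite set $F\subset A$, is exactly the nontrivial point, and it is not addressed.

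Second, and more decisively, the final ``degeneration'' step is false. Knowing $w_x=\mu_xv_{\delta(x)}\in\mathbb{T}\{v_h\}$ does not make the $a_x$ of Lemma \ref{newlem2} scalar: in that proof the conjugated form of $x$ is governed by the projections $p_h$ and the partial isometries summing to $p_e$, not by the shape of $w_x$, and the conclusion remains $wxw^*=a_x(1_n\otimes v_{\delta(x)})$ with $a_x\in M_n(A)$. A concrete counterexample to your claim: take $\mathcal{G}=\{av_h\mid a\in\mathcal{U}(A),\,h\in\Lambda\}$; then $\Delta(av_h)=v_h\otimes av_h$, so the hypotheses of Lemma \ref{newlem2} hold with $U=1$ and $w_{av_h}=v_h$ already a group element, yet $\mathcal{G}$ cannot be conjugated into $\mathbb{T}\{v_h\}$ when $A\neq\mathbb{C}$. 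The paper's proof avoids both problems by using condition 2 differently: the finiteness of $F$ gives the uniform lower bound $\sum_{z\in F}\tau\bigl((\Delta(x)\otimes x)(x\otimes\Delta(x))^*(z^*\otimes z\otimes 1)\bigr)\ge\delta^2$, so a convexity average produces a unitary $Z$ with $(\Delta(x)\otimes x)Z=Z(x\otimes\Delta(x))$; the tensor trick $Y=(Z\otimes1)(1\otimes Z)$ and weak mixing then yield $U\in\mathcal{U}(M\bar\otimes M)$ and a character $\gamma$ with $U\Delta(x)U^*=\gamma(x)(x\otimes x)$. Feeding this into Lemma \ref{newlem2} with $w_x=\gamma(x)x$ makes (\ref{incl}) read $\gamma(x)zxz^*\in\{v_h\}_{h\in\Lambda}$, which is already the conclusion; no height argument and no degeneration of $a_x$ is needed.
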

\begin{proof} We may assume that $F\subset A$ is an orthonormal system with respect to $\tau$. Set $\delta=\inf_{x\in \mathcal{G}}\sup_{h\in\Lambda}\Sigma_{z\in F}|\tau(E_{A}(xv_{h}^{*})z)|^{2} $. For any $x=\Sigma_{h\in \Lambda}a_{h}v_{h}\in\mathcal{G}$, we have
\begin{align*}
\Sigma_{z\in F}\tau((\Delta(x)\otimes x)(x\otimes \Delta(x))^{*}(z^{*}\otimes z \otimes 1))&=
\Sigma_{z\in F}\Sigma_{h\in\Lambda}|\tau(a_{h}z)|^{2}\tau(a_{h}a_{h}^{*}).
\end{align*}
Since $F$ is an orthonormal system in $A$, we have $\Sigma_{z\in F}|\tau(a_{h}z)|^{2}\le \tau(a_{h}a_{h}^{*})$ for all $h\in \Lambda$. Thus we can continue as 
\begin{align*}
\Sigma_{z\in F}\Sigma_{h\in\Lambda}|\tau(a_{h}z)|^{2}\tau(a_{h}a_{h}^{*}) &\ge
\Sigma_{h\in\Lambda} (\Sigma_{z\in F}|\tau(a_{h}z)|^{2})^{2} \\
&\ge \delta^{2}
\end{align*}
for all $x \in \mathcal{G}$. Taking $Z\in M\bar{\otimes}M\bar{\otimes}M$ to be the  $\|\cdot\|_{2}$-minimum element in  $\overline{\text{co}}^{\text{w.o}.}\{\Sigma_{z\in F}(\Delta(x)\otimes x)(x\otimes \Delta(x))^{*}|x\in \mathcal{G}\}$, $Z$ is non zero and $(\Delta(x)\otimes x)Z=Z(x\otimes \Delta(x))$ holds for all $x \in \mathcal{G}$.  Hence $Z^{*}Z$ is a positive scalar and so we may assume that $Z$ is a unitary. Setting $Y=(Z\otimes 1)(1\otimes Z) \in \mathcal{U}(M\bar{\otimes}M\bar{\otimes}M\bar{\otimes}M)$, we have $(\Delta(x)\otimes x\otimes x)Y=Y(x\otimes x\otimes \Delta(x))$ for all $x\in \mathcal{G}$. This shows that the unitary representation $\xi\mapsto (x\otimes x)\xi\Delta(x)^{*}$ of $\mathcal{G}$ on $L^{2}(M\bar{\otimes}M)$ is not weakly mixing. Thus we can take an irreducible finite dimensional unitary representation $\eta:\mathcal{G}\to\mathcal{U}(\mathbb{C}^{n})$ and a non-zero vector $\xi \in \mathbb{C}^{n}\otimes L^{2}(M\bar{\otimes}M)$ with $(\eta(x)\otimes x \otimes x)\xi =\xi\Delta(x)$ for all $x \in \mathcal{G}$. Since $\{\mathrm{Ad}(x\otimes x)\}_{x\in \mathcal{G}}$ is weakly mixing on $L^{2}(M\bar{\otimes}M)\ominus\mathbb{C}$ and $\eta$ is irreducible, $\xi\xi^{*}$ is a scalar. Hence $n=1$ and we have found a unitary $U\in\mathcal{U}( M\bar{\otimes}M)$ and a character $\gamma:\mathcal{G}\to\mathbb{T}$ such that $\gamma(x)(x\otimes x)U=U\Delta(x)$ holds for all  $x \in \mathcal{G}$. As in the proof of Proposition \ref{mainprop}, we see that this $U\in\mathcal{U}( M\bar{\otimes}M)$ satisfies the assumptions of  Lemma \ref{newlem2}.

Now we can start reading the proof of  Lemma \ref{newlem2} with $w_{x}=\gamma(x)x$ for all $x \in \mathcal{G}$. At (\ref{incl}), we obtain $z\in\mathcal{U}(M)$ such that $\{\gamma(x)zxz^{*}\}_{x\in\mathcal{G}}\subset \{v_{h}\}_{h\in\Lambda}$ holds. This is enough for this theorem.
\end{proof}

We next prove an easy lemma which shows that the property \[\inf_{x\in \mathcal{G}}\sup_{h\in\Lambda}\|E_{A}(xv_{h}^{*})\|_{2} >0\] for $\mathcal{G}$ is invariant under unitary conjugacy.

\begin{lem}
Let $M=A\rtimes\Lambda$ be a crossed product von Neumann algebra and $\mathcal{G}\subset\mathcal{U}(M)$ a subgroup. Then for any $w\in\mathcal{U}(M)$, we have $\inf_{x\in \mathcal{G}}\sup_{h\in\Lambda}\|E_{A}(xv_{h}^{*})\|_{2} >0$ if and only if  $\inf_{x\in w\mathcal{G}w^{*}}\sup_{h\in\Lambda}\|E_{A}(xv_{h}^{*})\|_{2} >0$. \label{uclem}
\end{lem}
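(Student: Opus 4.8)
The plan is to run an $\varepsilon$--$\delta$ approximation argument: I want to show that the functional $x\mapsto\sup_{h\in\Lambda}\|E_A(xv_h^*)\|_2$ cannot drop from a positive infimum to zero after conjugation by a fixed $w$, by producing, for a suitable finite-support approximation of $w$, a single test vector that detects one large Fourier block of $x$ after conjugation.

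First I would record the geometric meaning of the quantity. Writing $L^2(M)=\bigoplus_{h\in\Lambda}L^2(A)v_h$, the $h$-th component of a vector $\xi$ has norm $\|E_A(\xi v_h^*)\|_2$, so $x\mapsto\sup_h\|E_A(xv_h^*)\|_2$ is a seminorm dominated by $\|\cdot\|_2$. Since conjugating $w\mathcal Gw^*$ by $w^*$ returns $\mathcal G$, it suffices to prove only the implication $\inf_{x\in\mathcal G}\sup_h\|E_A(xv_h^*)\|_2>0\ \Rightarrow\ \inf_{x\in\mathcal G}\sup_h\|E_A(wxw^*v_h^*)\|_2>0$.

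Set $\delta=\inf_{x\in\mathcal G}\sup_h\|E_A(xv_h^*)\|_2>0$, fix $\varepsilon_0=\delta^2/16$, and use Kaplansky's density theorem applied to the $\ast$-dense algebraic crossed product to choose a finite sum $w_0=\sum_{s\in F}b_sv_s$ ($F\subset\Lambda$ finite, $b_s\in A$) with $\|w_0\|\le 1$ and $\|w-w_0\|_2<\varepsilon_0$; the bounded operator norm is what makes every approximation error uniform in $x$ and fixes the finite set $F$ before any quantification over $x$. For a given $x\in\mathcal G$, pick $h_0$ with $\|a\|_2\ge\delta/2$, where $a=E_A(xv_{h_0}^*)$ (so also $\|a\|\le 1$), and take as test vector $w_0\,a\,v_{h_0}\,w_0^*$, viewed in $L^2(M)$; its Fourier support lies in $Fh_0F^{-1}$, of cardinality at most $|F|^2$ \emph{uniformly in $h_0$}, and $\|w_0\,a\,v_{h_0}\,w_0^*\|_2\le 1$. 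On one hand, a componentwise Cauchy--Schwarz estimate over these blocks gives $|\langle wxw^*,\,w_0\,a\,v_{h_0}\,w_0^*\rangle|\le\big(\sup_h\|E_A(wxw^*v_h^*)\|_2\big)\cdot|F|$. On the other hand, the exact identity $\langle wxw^*,\,w\,a\,v_{h_0}\,w^*\rangle=\tau(a^*xv_{h_0}^*)=\tau(a^*a)=\|a\|_2^2$ (using $w^*w=1$ and traciality), together with $\|w_0\,a\,v_{h_0}\,w_0^*-w\,a\,v_{h_0}\,w^*\|_2\le 2\varepsilon_0$, shows the real part of the pairing is at least $\|a\|_2^2-2\varepsilon_0\ge\delta^2/8$. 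Combining the two bounds yields $\sup_h\|E_A(wxw^*v_h^*)\|_2\ge\delta^2/(8|F|)$, a positive constant independent of $x$, which is the desired conclusion.

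The main obstacle is precisely what these two devices are designed to defeat. Under conjugation a single large block $a\,v_{h_0}$ of $x$ spreads over the blocks indexed by $Fh_0F^{-1}$ with possible cancellation, and the naive estimate would require controlling the $\ell^1$-norm of the Fourier coefficients of $w\,v_{h_0}\,w^*$, which need not be finite and, worse, need not be uniform in $h_0$. Pairing against the \emph{finite-support} test vector $w_0\,a\,v_{h_0}\,w_0^*$ caps the relevant support at $|F|^2$ blocks regardless of $h_0$, while Kaplansky density keeps $\|w_0\|$ bounded so that replacing $w$ by $w_0$ costs only $O(\varepsilon_0)$ uniformly in $x$; once $\varepsilon_0$ is fixed in terms of $\delta$ alone, the resulting constant $|F|$ is harmless.
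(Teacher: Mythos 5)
Your proof is correct. It rests on the same key device as the paper's: Kaplansky density gives a finite-support approximant $w_{0}=\sum_{s\in F}b_{s}v_{s}$ of $w$ with $\|w_{0}\|\le 1$, the operator-norm control makes the replacement $w\rightsquigarrow w_{0}$ cost $O(\varepsilon_{0})$ uniformly in $x$, and the finiteness of $F$ caps the spreading of Fourier blocks under conjugation. Where you diverge is in how the final estimate is closed. The paper works ``forward'': it expands $w_{0}x w_{0}^{*}$ explicitly in Fourier coefficients and bounds each one by $|F|^{2}\sup_{h}\|E_{A}(xv_{h}^{*})\|_{2}$, i.e.\ it shows conjugation by a finite-support contraction inflates the seminorm by at most a fixed factor, and then perturbs back to $w$; this yields a clean multiplicative bound and is phrased as a sequential/contrapositive statement. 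You instead argue by duality: you never compute the coefficients of the conjugated element, but detect one large block $a v_{h_{0}}$ of $x$ by pairing $wxw^{*}$ against the finite-support test vector $w_{0}av_{h_{0}}w_{0}^{*}$, using the exact identity $\langle wxw^{*},\,wav_{h_{0}}w^{*}\rangle=\|a\|_{2}^{2}$ and a blockwise Cauchy--Schwarz over the at most $|F|^{2}$ blocks of the test vector. Both arguments are complete; the paper's gives the slightly sharper linear degradation $\delta\mapsto c\,\delta/|F|^{2}$ while yours gives $\delta\mapsto\delta^{2}/(8|F|)$, which is immaterial for the qualitative conclusion. One small point worth making explicit in your write-up: the blockwise Cauchy--Schwarz step uses that the $h$-components of $w_{0}av_{h_{0}}w_{0}^{*}$ satisfy $\sum_{h}\|\xi_{h}\|_{2}\le |F|\,(\sum_{h}\|\xi_{h}\|_{2}^{2})^{1/2}\le |F|$, which is where the support bound $|Fh_{0}F^{-1}|\le |F|^{2}$ enters.
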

\begin{proof}
We show that if $(x_{n})_{n}\subset \mathcal{G}$ is a sequence such that $\sup_{h\in\Lambda}\|E_{A}(x_{n}v_{h}^{*})\|_{2}\to 0$ as $n\to\infty$, we have $\sup_{h\in\Lambda}\|E_{A}(wx_{n}w^{*}v_{h}^{*})\|_{2}\to 0$, which clearly implies the lemma.

Take any $\epsilon>0$. By the Kaplansky theorem, we can take a finite set $F\subset\Lambda$ and a $w_{0}=\Sigma_{h\in F}b_{h}v_{h}\in M$ with $\|w-w_{0}\|_{2}<\epsilon$ and $\|w_{0}\|\le 1$.  Writing $x_{n}=\Sigma_{h\in\Lambda}a_{h}v_{h}$, we have
\begin{align*}
w_{0}x_{n}w_{0}^{*} &= \Sigma_{h,k,l\in\Lambda}b_{k}v_{k}a_{h}v_{h}v_{l}^{*}b_{l}^{*} \\
&= \Sigma_{h}(\Sigma_{k,l} b_{k}\sigma_{k}(a_{l})\sigma_{h}(b_{h^{-1}kl}^{*})) v_{h},
\end{align*}
and thus
\begin{align*}
\|E_{A}(w_{0}x_{n}w_{0}^{*}v_{h}^{*})\|_{2} &= \|\Sigma_{k,l} b_{k}\sigma_{k}(a_{l})\sigma_{h}(b_{h^{-1}kl}^{*})\|_{2} \\
&\le \Sigma_{k\in F}\Sigma_{l\in k^{-1}hF}\|b_{k}\|_{\infty}\cdot\|b_{h^{-1}kl}\|_{\infty}\cdot\|a_{l}\|_{2} \\
&\le \Sigma_{k\in F}\Sigma_{l\in k^{-1}hF}\|a_{l}\|_{2} \\
&\le |F|^{2}\cdot \sup_{h\in\Lambda}\|E_{A}(x_{n}v_{h}^{*})\|_{2},
\end{align*}
for all $h\in\Lambda$. Hence  $\sup_{h\in\Lambda}\|E_{A}(w_{0}x_{n}w_{0}^{*}v_{h}^{*})\|_{2}$ tends to $0$ as $n\to \infty$. Since $\|E_{A}(w_{0}x_{n}w_{0}^{*}v_{h}^{*})-E_{A}(wx_{n}w^{*}v_{h}^{*})\|_{2}\le2\epsilon$ for  all $h\in\Lambda$, we have $\limsup_{n}(\sup_{h\in\Lambda}\|E_{A}(wx_{n}w^{*}v_{h}^{*})\|_{2})\le 2\epsilon$. Since this holds for any $\epsilon>0$, we have $\sup_{h\in\Lambda}\|E_{A}(wx_{n}w^{*}v_{h}^{*})\|_{2}\to 0$.
\end{proof}

We end this section with an application of Lemma \ref{newlem}.
\begin{lem} \label{lastlem}
Let $N,M,L$ be tracial von Neumann algebras. For a unitary $Z\in\mathcal{U}(N\bar{\otimes}M\bar{\otimes}L)$, the following conditions are equivalent.
\begin{enumerate}
\item $Z_{124}^{*}Z_{123} \in 1\bar{\otimes} M\bar{\otimes}L\bar{\otimes} L$.
\item There exist unitaries $a\in\mathcal{U}(N\bar{\otimes}M)$ and $b\in\mathcal{U}(M\bar{\otimes}L)$ such that $Z=a_{12}b_{23}$ holds.
\end{enumerate}
\end{lem}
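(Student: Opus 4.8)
The plan is to reduce the whole statement to Lemma \ref{newlem}. The implication $(2)\Rightarrow(1)$ I would dispatch first by a direct computation: if $Z=a_{12}b_{23}$, then placing $Z$ on legs $(1,2,3)$ and on legs $(1,2,4)$ gives $Z_{123}=a_{12}b_{23}$ and $Z_{124}=a_{12}b_{24}$, the factor $a$ being insensitive to the third leg. Hence $Z_{124}^{*}Z_{123}=b_{24}^{*}a_{12}^{*}a_{12}b_{23}=b_{24}^{*}b_{23}$, which lies in $1\bar\otimes M\bar\otimes L\bar\otimes L$ precisely because $b\in M\bar\otimes L$ has trivial $N$-leg.

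For the harder implication $(1)\Rightarrow(2)$, I would set $C:=Z_{124}^{*}Z_{123}$ and note that condition $(1)$ says exactly that $C$ has trivial $N$-leg, so that $C$ may be viewed as a unitary in $M\bar\otimes L\bar\otimes L$ (two copies of $L$ over the base $M$). The target is to produce $b\in\mathcal{U}(M\bar\otimes L)$ with $C=b_{24}^{*}b_{23}$, which is exactly the shape of the conclusion of Lemma \ref{newlem} after substituting $M$ for the ambient $N$ and $L$ for the ambient $M$. To apply that lemma I must verify its hypothesis, the cocycle identity $C_{134}C_{123}=C_{124}$ (read in the relabelled legs). The key point is that this identity is automatic: introducing a third copy of $L$ and computing inside $N\bar\otimes M\bar\otimes L\bar\otimes L\bar\otimes L$, the three relevant amplifications of $C$ become $Z_{125}^{*}Z_{124}$, $Z_{124}^{*}Z_{123}$ and $Z_{125}^{*}Z_{123}$, and their composition telescopes by unitarity of $Z_{124}$. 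Thus Lemma \ref{newlem} applies and furnishes the desired $b$; condition $(1)$ is used only to guarantee that $C$ lands in $M\bar\otimes L\bar\otimes L$ so that the lemma is even applicable.

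Finally, with such a $b$ in hand I would set $A:=Zb_{23}^{*}\in\mathcal{U}(N\bar\otimes M\bar\otimes L)$ and show $A\in N\bar\otimes M\otimes 1$; writing $A=a\otimes 1$ then gives $Z=a_{12}b_{23}$ with $a\in\mathcal{U}(N\bar\otimes M)$, completing the proof. To see that $A$ is independent of the last leg, I would rearrange $C=b_{24}^{*}b_{23}$ into $Z_{123}b_{23}^{*}=Z_{124}b_{24}^{*}$, i.e. $A_{123}=A_{124}$ inside $N\bar\otimes M\bar\otimes L\bar\otimes L$. The left-hand side is trivial on leg $4$ while the right-hand side is trivial on leg $3$, so both sit in the intersection $N\bar\otimes M\otimes 1\otimes 1$, forcing $A\in N\bar\otimes M\otimes 1$. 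I expect the only genuine friction to be the leg-numbering bookkeeping — in particular keeping straight which copy of $L$ each amplification of $Z$ and of $C$ acts on — rather than any conceptual obstacle, since the cocycle identity feeding Lemma \ref{newlem} comes for free and everything else is formal manipulation of amplifications.
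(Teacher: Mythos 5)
Your proposal is correct and follows essentially the same route as the paper: reduce to Lemma \ref{newlem} applied to $W$ with $Z_{124}^{*}Z_{123}=1\otimes W$, verify the cocycle identity by the telescoping computation in the five-fold tensor, and then deduce $Zb_{23}^{*}\in N\bar{\otimes}M\otimes 1$ from $Z_{123}b_{23}^{*}=Z_{124}b_{24}^{*}$. The leg-numbering bookkeeping in your verification of $C_{134}C_{123}=C_{124}$ is also consistent with the paper's.
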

\begin{proof}
It is easy to see that the second condition implies the first condition, so we prove the opposite direction. Let  $W\in\mathcal{U}(M\bar{\otimes}L\bar{\otimes} L)$ be the unitary satisfying  $Z_{124}^{*}Z_{123} = 1\otimes W$. We can  directly check that this $W$ satisfies $W_{134}W_{123} = W_{124}$. Using Lemma \ref{newlem}, we obtain a unitary $b\in\mathcal{U}(M\bar{\otimes}L)$ such that $W=b_{13}^{*}b_{12}$ holds. Since $Z_{124}^{*}Z_{123} = 1\otimes W = b_{24}^{*}b_{23}$, we obtain $Z_{123}b_{23}^{*}=Z_{124}b_{24}^{*}$, which implies $Z_{123}b_{23}^{*}\in N\bar{\otimes}M\bar{\otimes} 1 \bar{\otimes} 1$. Taking $a\in\mathcal{U}(N\bar{\otimes}M)$ to be the unitary satisfying $Z_{123}b_{23}^{*}=a_{12}$, we have $Z=a_{12}b_{23}$.
\end{proof}

\section{APPLICATIONS TO UNITARY COCYCLES }
In this section, we show that the results in the previous section can be useful to the study of cocycles. 
We first review some terminology on cocycles from  \cite{popa2007cocycle}.
Let $N$ be a tracial von Neumann algebra and $\Lambda$ a discrete group with an action $\sigma:\Lambda \curvearrowright N$. A \textit{cocycle} for $\sigma$ is map $w:\Lambda \to \mathcal{U}(N)$ satisfying 
\begin{align}
w_{h}\sigma_{h}(w_{k}) = w_{hk} \label{uc1}
\end{align}
for all $h,k \in \Lambda$. This condition is equivalent to the fact that the map $h\in\Lambda \mapsto w_{h}v_{h}\in \mathcal{U}(N\rtimes\Lambda)$ is a group homomorphism. Two cocycles $w,w'$ are said to be \textit{cohomologous} if there exists a unitary $u\in\mathcal{U}(N)$ such that 
\begin{align}
u^{*}w_{h}\sigma_{h}(u)=w'_{h} \label{uc2}
\end{align}
holds for all $h\in\Lambda$.

Next, let $\sigma$ be a probabilty measure preserving action of a countable discrete group $\Lambda$ on a standard probability space $(X,\mu)$ and $\mathcal{V}$ a Polish group. A \textit{measurable cocycle} for $\sigma$ with values in $\mathcal{V}$ is a measurable map $w:X\times \Lambda \to \mathcal{V}$ such that for all $h_{1},h_{2}\in \Lambda$, 
\begin{align}
w(t,h_{1})w(h_{1}^{-1}t,h_{2})=w(t,h_{1}h_{2}) \label{mc1}
\end{align}
holds for $\mu$-almost everywhere $t\in X$. Two measurable cocycles $w,w'$ are said to be \textit{cohomologous} if there exists a measurable map $u:X\to \mathcal{V}$ such that for all $h\in\Lambda$,
\begin{align}
w'(t,h)=u(t)^{-1}w(t,h)u(h^{-1}t) \label{mc2}
\end{align}
holds for $\mu$-alomost everywhere $t\in X$.

The next lemma is fundamental. Note that  any measurable map $X\to \mathcal{V}$ can be naturally regarded as an element of $\mathcal{U}(L^{\infty}(X,\mu)\bar{\otimes}B)$. 

\begin{lem}[\cite{popa2007cocycle}, Lemma 2.3]
Let  $\sigma:\Lambda\curvearrowright (X,\mu)$ be a probability measure preserving action of a countable discrete group $\Lambda$ on a standard probability space $(X,\mu)$ and $\mathcal{V}$ a Polish group. Assume that $\mathcal{V}$ is a closed subgroup of the unitary group $\mathcal{U}(B)$ of a tracial von Neumann algebra $B$. Then, a map $w:X\times \Lambda \to \mathcal{V}$ is a measurable cocycle for $\sigma:\Lambda\curvearrowright (X,\mu)$ if and only if $h\in\Lambda\mapsto w_{h}=w(\cdot,h)\in \mathcal{U}(L^{\infty}(X,\mu)\bar{\otimes}B)$ is a cocycle for the action $\sigma\otimes id_{B}:\Lambda\curvearrowright L^{\infty}(X,\mu)\bar{\otimes}B$. Moreover,  two measurable cocycles $w,w'$ and  a measurable map $u:X\to \mathcal{V}$ satisfies equation (\ref{mc2}) if and only if equation (\ref{uc2}) holds when viewed as elements of $\mathcal{U}(L^{\infty}(X,\mu)\bar{\otimes}B)$. \label{ucmc}
\end{lem}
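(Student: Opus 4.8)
The plan is to reduce both assertions to the standard identification $L^{\infty}(X,\mu)\bar{\otimes}B\cong L^{\infty}(X,\mu;B)$ of the tensor product with the algebra of essentially bounded (weak-$*$) measurable $B$-valued functions on $X$, under which a unitary of $L^{\infty}(X,\mu)\bar{\otimes}B$ is precisely the a.e.\ class of an essentially bounded measurable map $X\to\mathcal{U}(B)$. Since $B$ is separable, Pettis's measurability theorem guarantees that weak and strong measurability agree on the bounded set $\mathcal{U}(B)$, so a map $w_{h}=w(\cdot,h):X\to\mathcal{V}\subset\mathcal{U}(B)$ is measurable in the relevant sense exactly when it is Borel as a map into the Polish group $\mathcal{V}$; as $\mathcal{V}$ is a closed subgroup of $\mathcal{U}(B)$ and $\|w_{h}\|_{\infty}\le 1$, each such $w_{h}$ then determines a well-defined element of $\mathcal{U}(L^{\infty}(X,\mu)\bar{\otimes}B)$. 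Because $\Lambda$ is countable and discrete, a map $w:X\times\Lambda\to\mathcal{V}$ is measurable if and only if every slice $w(\cdot,h)$ is, so the passage $w\leftrightarrow\{w_{h}\}_{h\in\Lambda}$ is a bijection between measurable maps $X\times\Lambda\to\mathcal{V}$ and families $\{w_{h}\}_{h\in\Lambda}\subset\mathcal{U}(L^{\infty}(X,\mu)\bar{\otimes}B)$.

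With this dictionary in place, the first equivalence becomes a pointwise computation. Writing the action $\sigma$ on $L^{\infty}(X,\mu)$ as $(\sigma_{h}f)(t)=f(h^{-1}t)$ and tensoring with $\mathrm{id}_{B}$, the element $(\sigma_{h_{1}}\otimes\mathrm{id}_{B})(w_{h_{2}})$ is the function $t\mapsto w(h_{1}^{-1}t,h_{2})$, so that $w_{h_{1}}\sigma_{h_{1}}(w_{h_{2}})$ is represented by $t\mapsto w(t,h_{1})w(h_{1}^{-1}t,h_{2})$, the product taken pointwise in $B$. Two elements of $L^{\infty}(X,\mu;B)$ coincide precisely when their representatives agree for $\mu$-a.e.\ $t$; hence the cocycle identity (\ref{uc1}) for $\{w_{h}\}$ says exactly that $w(t,h_{1})w(h_{1}^{-1}t,h_{2})=w(t,h_{1}h_{2})$ holds for $\mu$-a.e.\ $t$ and all $h_{1},h_{2}\in\Lambda$, which is the measurable cocycle identity (\ref{mc1}). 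This proves the first statement in both directions simultaneously.

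The cohomology statement is handled the same way. A measurable map $u:X\to\mathcal{V}$ corresponds to a unitary $u\in\mathcal{U}(L^{\infty}(X,\mu)\bar{\otimes}B)$, and since $u$ is unitary-valued we have $u^{*}=u^{-1}$ pointwise. Evaluating $u^{*}w_{h}\sigma_{h}(u)$ at $t$ gives $u(t)^{-1}w(t,h)u(h^{-1}t)$, so the relation (\ref{uc2}) reads $w'(t,h)=u(t)^{-1}w(t,h)u(h^{-1}t)$ for $\mu$-a.e.\ $t$ and every $h$, which is precisely (\ref{mc2}). Thus (\ref{uc2}) and (\ref{mc2}) are equivalent.

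The only genuine content lies in the first paragraph: one must verify carefully that the two notions of measurability — Borel measurability of a map into the Polish group $\mathcal{V}$ and measurability of the associated $B$-valued function — match up, and that equality of a.e.\ classes in $L^{\infty}(X,\mu;B)$ is the same as $\mu$-a.e.\ pointwise equality of representatives. Once this identification is established, using separability of $B$ and Pettis's theorem, the remaining steps are the two direct pointwise computations above; I expect the measurability bookkeeping, rather than any algebra, to be the main (and essentially only) obstacle.
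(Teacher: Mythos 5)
The paper does not prove this lemma; it is quoted verbatim from \cite{popa2007cocycle} (Lemma 2.3), so there is no in-paper argument to compare against. Your proof is correct and is the standard one: the identification $L^{\infty}(X,\mu)\bar{\otimes}B\cong L^{\infty}(X,\mu;B)$ (valid since $B$ is separable, with unitaries corresponding to a.e.\ classes of $\mathcal{U}(B)$-valued measurable maps, and Borel measurability into the Polish group $\mathcal{V}\subset\mathcal{U}(B)$ agreeing with the operator-valued measurability) reduces both equivalences to the pointwise computations you carry out, which is exactly how the cited result is established in the literature.
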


\begin{rem}
For a standard probability space $(X,\mu)$, we view $L^{\infty}(X,\mu)$ as a tracial von Neumann algebra with trace given by integration with respect to $\mu$. When  $\sigma:\Lambda\curvearrowright (X,\mu)$ is a probability measure preserving action, it naturally induces a trace preserving action $\Lambda\curvearrowright L^{\infty}(X,\mu)$, which is denoted by the same symbol $\sigma$.
\end{rem}

We now give a proof of the following theorem by Popa using the techniques from the previous section.
\begin{thm}[\cite{popa2007cocycle}, Theorem 3.1, \cite{furman2007popa}, Theorem 3.4 ]
Let $A, B, C$ be tracial von Neumann algebras and $\Lambda$ a countable discrete group. Let $\sigma^{A}: \Lambda\curvearrowright A, \sigma^{B}: \Lambda\curvearrowright B,  \sigma^{C}: \Lambda\curvearrowright C $ be trace peserving actions.
Assume that $\sigma^{A}$ is weakly mixing. Let $w:\Lambda \to \mathcal{U}(A\bar{\otimes}B)$ and $\tilde{w}:\Lambda\to \mathcal{U}(B\bar{\otimes}C)$ be cocycles for the actions $\sigma^{A}\otimes \sigma^{B}$ and $\sigma^{B}\otimes\sigma^{C}$, respectively. 
Assume that the cocycles $w, \tilde{w}$ are cohomologous in $\mathcal{U}(A\bar{\otimes}B\bar{\otimes}C)$ as cocycles for the action $\sigma^{A}\otimes\sigma^{B}\otimes\sigma^{C}$. Then, the following statements hold true:
\begin{itemize}
\item $w$ is cohomologous (as a cocycle for the action $\sigma^{A}\otimes \sigma^{B}$) to a cocycle which takes values in $\mathcal{U}(B)$.
\item $\tilde{w}$ is cohomologous (as a cocycle for the action  $\sigma^{B}\otimes\sigma^{C}$) to a cocycle which takes values in $\mathcal{U}(B)$.
\end{itemize}
\end{thm}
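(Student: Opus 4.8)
The plan is to deduce both statements from the factorization Lemma \ref{lastlem}, applied with $N=A$, $M=B$, $L=C$. First I would translate the hypothesis into an operator identity. By the definition of cohomologous cocycles (equation (\ref{uc2})), there is a unitary $\Omega\in\mathcal{U}(A\bar{\otimes}B\bar{\otimes}C)$ such that, viewing $w_{h}$ on the legs $12$ and $\tilde{w}_{h}$ on the legs $23$, one has $(\tilde{w}_{h})_{23}=\Omega^{*}(w_{h})_{12}\,\sigma_{h}(\Omega)$ for all $h$, where $\sigma=\sigma^{A}\otimes\sigma^{B}\otimes\sigma^{C}$. Equivalently,
\begin{align*}
\sigma_{h}(\Omega)=(w_{h}^{*})_{12}\,\Omega\,(\tilde{w}_{h})_{23},\qquad h\in\Lambda.
\end{align*}
The whole point is to show that $\Omega$ satisfies condition $(1)$ of Lemma \ref{lastlem}, i.e.\ $\Omega_{124}^{*}\Omega_{123}\in 1\bar{\otimes}B\bar{\otimes}C\bar{\otimes}C$; the lemma will then hand us the factorization $\Omega=a_{12}b_{23}$, from which everything follows.

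Set $X=\Omega_{124}^{*}\Omega_{123}\in\mathcal{U}(A\bar{\otimes}B\bar{\otimes}C\bar{\otimes}C)$ and write $\sigma^{(4)}=\sigma^{A}\otimes\sigma^{B}\otimes\sigma^{C}\otimes\sigma^{C}$. Applying $\sigma^{(4)}$ and substituting the displayed identity on the legs $123$ and $124$, a direct computation (the factors $(w_{h}^{*})_{12}$ cancelling) gives
\begin{align*}
\sigma_{h}^{(4)}(X)=(\tilde{w}_{h}^{*})_{24}\,X\,(\tilde{w}_{h})_{23},\qquad h\in\Lambda.
\end{align*}
I would then observe that $\Gamma_{h}(Y)=(\tilde{w}_{h})_{24}\,(\sigma^{B}\otimes\sigma^{C}\otimes\sigma^{C})_{h}(Y)\,(\tilde{w}_{h}^{*})_{23}$ defines a genuine unitary representation of $\Lambda$ on $L^{2}(B\bar{\otimes}C\bar{\otimes}C)$ (the cocycle identity for $\tilde{w}$ is exactly what makes $\Gamma_{h}\Gamma_{k}=\Gamma_{hk}$), and that the identity above is precisely the statement that the vector $X$ is invariant under the diagonal representation $\sigma^{A}\otimes\Gamma$ on $L^{2}(A)\otimes L^{2}(B\bar{\otimes}C\bar{\otimes}C)$.

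Now weak mixing of $\sigma^{A}$ enters. Since $\sigma^{A}$ is weakly mixing on $L^{2}(A)\ominus\mathbb{C}$, the representation $(\sigma^{A}|_{L^{2}(A)\ominus\mathbb{C}})\otimes\Gamma$ is again weakly mixing and hence has no nonzero invariant vectors, so every invariant vector of $\sigma^{A}\otimes\Gamma$ lies in $\mathbb{C}1_{A}\otimes L^{2}(B\bar{\otimes}C\bar{\otimes}C)$. Therefore $X\in 1\bar{\otimes}B\bar{\otimes}C\bar{\otimes}C$, which is exactly condition $(1)$ of Lemma \ref{lastlem}. Applying the lemma, I obtain unitaries $a\in\mathcal{U}(A\bar{\otimes}B)$ and $b\in\mathcal{U}(B\bar{\otimes}C)$ with $\Omega=a_{12}b_{23}$.

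It remains to read off the two cohomologies. Substituting $\Omega=a_{12}b_{23}$ into $\sigma_{h}(\Omega)=(w_{h}^{*})_{12}\Omega(\tilde{w}_{h})_{23}$ yields
\begin{align*}
((\sigma^{A}\otimes\sigma^{B})_{h}(a))_{12}\,((\sigma^{B}\otimes\sigma^{C})_{h}(b))_{23}=(w_{h}^{*}a)_{12}\,(b\tilde{w}_{h})_{23}.
\end{align*}
Since $(A\bar{\otimes}B\otimes 1)\cap(1\otimes B\bar{\otimes}C)=1\otimes B\otimes 1$ inside $A\bar{\otimes}B\bar{\otimes}C$, comparing the two $(\cdot)_{12}(\cdot)_{23}$ decompositions forces a single unitary $c_{h}\in\mathcal{U}(B)$ (on the leg $2$) with
\begin{align*}
c_{h}=a^{*}w_{h}\,(\sigma^{A}\otimes\sigma^{B})_{h}(a)=b\,\tilde{w}_{h}\,(\sigma^{B}\otimes\sigma^{C})_{h}(b)^{*}.
\end{align*}
The first equality says $w$ is cohomologous via $a$ to the $\mathcal{U}(B)$-valued cocycle $c$, and the second says $\tilde{w}$ is cohomologous via $b$ to the same $c$; these are the two asserted statements. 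The main obstacle is the middle step: realizing that verifying the hypothesis of Lemma \ref{lastlem} amounts to an invariance statement for the auxiliary unitary $X=\Omega_{124}^{*}\Omega_{123}$, and that weak mixing of $\sigma^{A}$ is precisely what kills its $A$-component. Once $X$ and its transformation rule are identified, both the factorization and the final bookkeeping are routine.
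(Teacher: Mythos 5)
Your proposal is correct and follows essentially the same route as the paper: the same identity $\sigma_h(\Omega_{124}^*\Omega_{123})=(\tilde w_h^*)_{24}\,\Omega_{124}^*\Omega_{123}\,(\tilde w_h)_{23}$, weak mixing of $\sigma^A$ to force $\Omega_{124}^*\Omega_{123}\in 1\bar\otimes B\bar\otimes C\bar\otimes C$, Lemma \ref{lastlem} to factor $\Omega=a_{12}b_{23}$, and the intersection $(A\bar\otimes B\otimes 1)\cap(1\otimes B\bar\otimes C)=1\otimes B\otimes 1$ to conclude; you even supply more detail than the paper at the weak-mixing step by exhibiting the representation $\Gamma$. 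The only (inconsequential) slip is that the conjugating unitary for $\tilde w$ is $b^*$ rather than $b$.
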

\begin{proof} Consider the action  $\sigma=\sigma^{A}\otimes\sigma^{B}\otimes\sigma^{C}\otimes\sigma^{C}$ on  $A\bar{\otimes}B\bar{\otimes}C\bar{\otimes}C$. By our assumption, there is a unitary $Z\in \mathcal{U}(A\bar{\otimes}B\bar{\otimes}C)$ such that $(w_h)_{12}\sigma_h(Z) = Z (\tilde{w}_h)_{23}$ holds for all $h\in \Lambda$.
Thus we have \[Z_{124}^{*}Z_{123}=(\tilde{w}_h)_{24}\sigma_{h}(Z_{124}^{*}Z_{123})(\tilde{w}_h^{*})_{23}\]  for all $h\in \Lambda$. Since $\sigma^{A}$ is weakly mixing, we have $Z_{124}^{*}Z_{123}\in 1\bar{\otimes}B\bar{\otimes}C\bar{\otimes}C$. By Lemma \ref{lastlem}, we can take unitaries $u\in\mathcal{U}(A\bar{\otimes}B)$ and $v\in\mathcal{U}(B\bar{\otimes}C)$ with $Z=u_{12}v_{23}$. Now, by $w_h\sigma_h(Z) = Z \tilde{w}_h$, we have \[u_{12}^{*}w_h\sigma_h(u_{12})=v_{23}\tilde{w}_h\sigma(v_{23}^{*})\] for all $h\in \Lambda$. Since the left-hand side is in $A\bar{\otimes}B$, and the right-hand side is in $B\bar{\otimes}C$, we see that both sides are in $B$. This proves the theorem.
\end{proof}


\section{ACKNOWLEDGEMENTS} 
The author would like to thank his supervisor, Professor Yasuyuki Kawahigashi for his helpful comments and continuing support. He is also grateful to Yusuke Isono for some comments on this paper. The author is supported by  JSPS
KAKENHI Grant Number JP22J10118 and JST CREST program JPMJCR18T6.

\bibliographystyle{amsalpha}
\bibliography{myref(1)}

\end{document}